\documentclass[11pt,english,reqno]{amsart}

\usepackage{amsthm,amsmath,amssymb,amsfonts}
\usepackage{booktabs,float,caption,setspace}
\usepackage{mathtools,mleftright}
\usepackage[usenames]{color}
\usepackage{a4wide}

\definecolor{webgreen}{rgb}{0,.5,0}
\definecolor{webbrown}{rgb}{.6,0,0}

\usepackage[
  pdfauthor={},
  pdfkeywords={},
  pdftitle={},
  pdfcreator={},
  pdfproducer={},
  linktocpage,colorlinks,bookmarksnumbered,linkcolor=blue,
  citecolor=webgreen,urlcolor=webbrown]{hyperref}


\theoremstyle{plain}
\newtheorem{theorem}{Theorem}
\newtheorem{corl}[theorem]{Corollary}
\newtheorem{prop}[theorem]{Proposition}
\newtheorem{lemma}[theorem]{Lemma}

\theoremstyle{definition}

\newtheorem{remark}[theorem]{Remark}

\numberwithin{equation}{section}
\numberwithin{theorem}{section}
\numberwithin{figure}{section}
\numberwithin{table}{section}

\captionsetup[table]{labelfont=bf,textfont=small,skip=5pt}
\captionsetup[figure]{labelfont=bf,textfont=small,skip=5pt}

\newcommand{\NN}{\mathbb{N}}
\newcommand{\ZZ}{\mathbb{Z}}
\newcommand{\QQ}{\mathbb{Q}}

\newcommand{\CC}{\mathbb{C}}
\newcommand{\DD}{\mathbb{D}}

\DeclareMathOperator{\ord}{ord}
\DeclareMathOperator{\real}{Re}
\DeclareMathOperator{\denom}{denom}
\DeclarePairedDelimiter{\norm}{|}{|}

\newcommand{\set}[1]{\left\{#1\right\}}
\newcommand{\mids}{\,\mid\,}
\newcommand{\nmids}{\,\nmid\,}
\newcommand{\valueat}[1]{{\,}_{\big|\, #1}}
\newcommand{\opdf}[2]{\nabla_{\hspace{-0.5ex}#1}^{\hspace{-0.03ex}#2}}
\newcommand{\opsh}{\eth}
\newcommand{\andq}{\quad \text{and} \quad}
\newcommand{\textq}[1]{\quad \text{#1} \quad}

\newcommand{\bb}{\mathbf{b}}
\newcommand{\BN}{\mathbf{B}}
\newcommand{\BNP}{\widehat{\BN}}
\newcommand{\BNPD}{\widehat{\overline{\BN}}}
\newcommand{\BNT}{\widetilde{\BN}}
\newcommand{\BNN}{\overline{\mathcal{B}}}
\newcommand{\QT}{\widetilde{Q}}
\newcommand{\SH}{\widehat{S}}
\newcommand{\WQ}{\mathcal{W}}

\newcommand{\arxiv}[1]{\href{https://arxiv.org/abs/#1}{arXiv:#1}}

\makeatletter
\newcommand{\spod}[1]{\allowbreak\if@display\mkern8mu\else\mkern4mu\fi(#1)}
\newcommand{\smod}[1]{\spod{{\operator@font mod}\mkern6mu#1}}
\makeatother

\hfuzz2pt
\vfuzz2pt
\frenchspacing


\begin{document}

\title[Wilson's theorem modulo higher prime powers II]
{Wilson's theorem modulo higher prime powers II:\\Bernoulli numbers and polynomials}
\author{Bernd C. Kellner}
\address{G\"ottingen, Germany}
\email{bk@bernoulli.org}
\subjclass[2020]{11B65 (Primary), 11B68 (Secondary)}
\keywords{Fermat quotient, Wilson quotient, Wilson's theorem, 
Bernoulli number and polynomial, power sum, supercongruence.}

\begin{abstract}
By recent work of the author, Wilson's theorem as well as the Wilson quotient can 
be described by supercongruences of power sums of Fermat quotients modulo every 
higher prime power. We translate these congruences into congruences of power sums 
and Bernoulli numbers. This together provides relatively short proofs of the 
congruences compared to former approaches. As an application, we compute, e.g., 
the Wilson quotient up to modulo~$p^4$ and equivalently the factorial $(p-1)!$ 
up to modulo~$p^5$, which can be extended to any higher prime power with some effort.
As a by-product, we determine some power sums of the Fermat quotients up to modulo~$p^4$.
\end{abstract}

\maketitle


\section{Introduction}

By the well-known theorem of Wilson, one has
\[
  (p-1)! \equiv -1 \smod{p},
\]
whenever $p$ is a prime. In this context, the Wilson quotient is defined to be
\begin{equation} \label{eq:wp-def}
  \WQ_p = \frac{(p-1)! + 1}{p}.
\end{equation}
Let $p$ always denote a prime further on. 

In 1900, Glaisher~\cite{Glaisher:1900} and later Beeger~\cite{Beeger:1913} showed that
\[
 \WQ_p \equiv \BN_{p-1} + \tfrac{1}{p} - 1 \smod{p}
\]
in terms of the Bernoulli numbers $\BN_n$, which will be introduced later.
In~1938, E.~Lehmer~\cite{Lehmer:1938} derived more generally for $r \geq 1$ that
\begin{align*}
  r \WQ_p &\equiv \BN_{r(p-1)} + \tfrac{1}{p} - 1 \smod{p}, \\
\intertext{implying by difference that}
  \WQ_p &\equiv \BN_{2(p-1)} - \BN_{p-1} \smod{p}.
\end{align*}
Carlitz~\cite{Carlitz:1953} improved the result in 1953, as follows.
For $k \geq 0$ and $r \geq 1$, it holds that
\begin{equation} \label{eq:wp-car}
  r \WQ_p \equiv \frac{\BN_{r p^k (p-1)} + \tfrac{1}{p} - 1}{p^k} \smod{p}.
\end{equation}

Fermat's little theorem states that the congruence
\[
  a^{p-1} \equiv 1 \smod{p}
\]
holds for any integer $a$ coprime to $p$. Similarly, the Fermat quotient is given by
\begin{equation} \label{eq:qp-def}
  q_p(a) = \frac{a^{p-1} - 1}{p}.
\end{equation}
More generally, we consider sums of powers of the Fermat quotients defined by
\[
  Q_p(n) = \sum_{a=1}^{p-1} q_p(a)^n \quad (n \geq 1).
\]
Using this notation, Lerch~\cite{Lerch:1905} established the basic relationship in 1905 that
\[
  \WQ_p \equiv Q_p(1) \smod{p}.
\]

Recent work of the author revealed the general case, as follows.
\begin{theorem}[Kellner \cite{Kellner:2025}] \label{thm:kel}
We have the following statements:
\begin{enumerate}
\item There exist unique multivariate polynomials 
\[
  \psi_\nu(x_1,\ldots,x_\nu) \in \ZZ[x_1,\ldots,x_\nu] \quad (\nu \geq 1),
\]
which have no constant term and can be computed recursively; 

\item Let ${r \geq 1}$ and ${p > r}$ be an odd prime. Then we have 
\begin{align*}
  \WQ_p &\equiv \sum_{\nu=1}^{r} \frac{p^{\nu-1}}{\nu!} \, \psi_\nu(Q_p(1),\ldots,Q_p(\nu)) \smod{p^r} \\
\shortintertext{and equivalently}
  (p-1)! &\equiv -1 + \sum_{\nu=1}^{r} \frac{p^\nu}{\nu!} \, \psi_\nu(Q_p(1),\ldots,Q_p(\nu)) \smod{p^{r+1}}.
\end{align*}
\end{enumerate}
\end{theorem}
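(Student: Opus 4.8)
The plan is to convert the multiplicative content of Fermat's little theorem into an additive relation via the $p$-adic logarithm, and then to invert it. First I would record the basic identity: writing $a^{p-1}=1+pq_p(a)$ for $1\le a\le p-1$ and multiplying over all $a$ gives
\[
 ((p-1)!)^{p-1}=\prod_{a=1}^{p-1}\bigl(1+pq_p(a)\bigr).
\]
Since $p$ is odd and $(p-1)!=p\WQ_p-1$, the left side equals $(1-p\WQ_p)^{p-1}$, and $u:=1-p\WQ_p$ is a principal unit of $\ZZ_p$. Applying the $p$-adic logarithm term by term and summing the convergent series $\log(1+pq_p(a))=\sum_{n\ge1}(-1)^{n-1}(pq_p(a))^n/n$ over $a$ yields
\[
 (p-1)\log u=\sum_{n\ge1}\frac{(-1)^{n-1}p^n}{n}\,Q_p(n)=:L.
\]
Because $p-1\in\ZZ_p^\times$, this is equivalent to $u=\exp\!\bigl(L/(p-1)\bigr)$, whence
\[
 \WQ_p=\frac{1-\exp(L/(p-1))}{p}=-\frac1p\sum_{k\ge1}\frac{1}{k!}\Bigl(\frac{L}{p-1}\Bigr)^{k}.
\]

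Next I would read off the polynomials $\psi_\nu$ from this expansion. Expanding $1/(p-1)=-\sum_{j\ge0}p^j$ and $L$ as power series in $p$ shows that $Y:=L/(p-1)=\sum_{m\ge1}y_m p^m$ with $y_m=\sum_{n=1}^{m}(-1)^{n}Q_p(n)/n$, whose denominators are coprime to $p$ for $m<p$. Treating the $Q_p(n)$ as independent indeterminates $x_n$, the exponential formula gives the coefficient of $p^{\nu-1}$ in $\WQ_p$ as a universal element of $\QQ[x_1,\ldots,x_\nu]$ with no constant term, which I define to be $\psi_\nu/\nu!$; uniqueness is automatic from the formal identity, while the recursion for $\psi_\nu$ comes from matching powers of $p$ in $(p-1)\log u=L$, i.e. in $-\sum_m(p\WQ_p)^m/m=L/(p-1)$. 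The integrality $\psi_\nu\in\ZZ[x_1,\ldots,x_\nu]$ is the genuinely arithmetic point: a monomial arising from a partition of $\nu$ into parts $m$ with multiplicities $c_m$, with a factor $x_n/n$ ($1\le n\le m$) chosen from each part, carries denominator $\prod_m c_m!\cdot\prod_{\text{parts}}n$; since $n\mid m!$ and $\prod_m c_m!\cdot\prod_{\text{parts}}m!$ divides $\nu!$ (it counts the set partitions of the corresponding type), multiplying by $\nu!$ clears all denominators, and the absence of a constant term follows because every partition of $\nu\ge1$ has at least one part.

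Finally I would pass from the $p$-adic identity $\WQ_p=\sum_{\nu\ge1}\frac{p^{\nu-1}}{\nu!}\psi_\nu(Q_p(1),\ldots,Q_p(\nu))$ to the stated congruence modulo $p^r$. The partial sum up to $\nu=r$ is $p$-integral because $\nu!$ is a $p$-unit for $\nu\le r<p$, so it remains to show the tail has $\ord_p\ge r$. For $r<\nu<p$ each term has $\ord_p\ge\nu-1\ge r$ at once; the delicate range is $\nu\ge p$, where $1/\nu!$ contributes negative valuation, and here the crude estimate $\ord_p\bigl(p^{\nu-1}/\nu!\bigr)=\nu-1-\ord_p(\nu!)$ only gives $\ge r-1$ in the borderline case $p=r+1$, $\nu=p$. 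This is the main obstacle: one must establish the extra divisibility $\ord_p\bigl(\psi_\nu(Q_p(1),\ldots,Q_p(\nu))\bigr)\ge1$ for those indices, which I would extract from the exponential structure together with the hypothesis $p>r$ rather than from a term-by-term bound. Granting this, the tail vanishes modulo $p^r$ and the first congruence follows; multiplying through by $p$ and using $p\WQ_p=(p-1)!+1$ then yields the equivalent factorial congruence modulo $p^{r+1}$.
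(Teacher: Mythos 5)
First, a structural point: the paper does not prove Theorem~\ref{thm:kel} at all; it is imported verbatim from the companion paper \cite{Kellner:2025}, so there is no internal proof to compare against. Judged on its own terms, your logarithmic strategy is sound and matches the known constructions: the identity $((p-1)!)^{p-1}=\prod_a(1+pq_p(a))$, the passage to $(p-1)\log u=L$ with $u=1-p\WQ_p$, the inversion by $\exp$ on $1+p\ZZ_p$, the extraction of $\psi_\nu$ as $\nu!$ times the coefficient of $p^{\nu-1}$, and the integrality argument via the set-partition count $\nu!/\bigl(\prod_mc_m!\,(m!)^{c_m}\bigr)\in\ZZ$ are all correct. (You should also justify the rearrangement of the double series by total $p$-power, which does hold since a term of total degree $M$ has valuation at least $M-(M-1)/(p-1)$.) For $\nu\le3$ your construction reproduces Table~\ref{tbl:psi} exactly.

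The genuine gap is the one you flag yourself and then dismiss with ``granting this'': when $p=r+1$, the tail term $\nu=p$ has $\ord_p\bigl(p^{\nu-1}/\nu!\bigr)=p-2=r-1$, so the congruence modulo $p^r$ requires the extra divisibility $p\mid\psi_p(Q_p(1),\ldots,Q_p(p))$, for which you give no argument. This is not a formality; it is exactly where the hypothesis $p>r$ must do its work, and ``extract from the exponential structure'' is not a proof. The claim is true and is closable inside your framework: each monomial of $\psi_p$ has coefficient $\pm\,p!/\bigl(\prod_mc_m!\prod_jn_j\bigr)$, and a valuation count shows every such coefficient is divisible by $p$ except those of $x_1^p$ (from the partition $(1^p)$, coefficient $1$) and of $x_p$ (from the partition $(p)$ with $n=p$, coefficient $(p-1)!\equiv-1$ by Wilson), so $\psi_p\equiv x_1^p-x_p\smod{p}$; then $Q_p(1)^p\equiv Q_p(1)$ by Fermat applied to the integer $Q_p(1)$, and $Q_p(p)=\sum_aq_p(a)^p\equiv Q_p(1)\smod{p}$, whence $\psi_p(Q_p(1),\ldots,Q_p(p))\equiv0\smod{p}$. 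Until you supply this (or an equivalent) argument, the proof is incomplete in precisely the extremal case $r=p-1$ that the theorem's hypothesis is designed to include.
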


The polynomials $\psi_\nu$ can be calculated effectively with the help of 
Bell polynomials by a recurrence formula, see \cite{Kellner:2025}.
Table~\ref{tab:psi} below shows the first few polynomials~$\psi_\nu$ as
computed in \cite{Kellner:2025}.

\begin{table}[H] \small
\setstretch{1.25}
\begin{center}
\begin{tabular}{r@{\;=\;}l}
  \toprule
  $\psi_1$ & $x_1$ \\
  $\psi_2$ & $2 x_1 - x_1^2 - x_2$ \\
  $\psi_3$ & $6 x_1 - 6 x_1^2 + x_1^3 + 3 x_1 x_2 - 3 x_2 + 2 x_3$ \\
  $\psi_4$ & $24 x_1 - 36 x_1^2 + 12 x_1^3 - x_1^4 - 6 x_1^2 x_2 + 24 x_1 x_2 - 8 x_1 x_3 - 12 x_2 - 3 x_2^2 + 8 x_3 - 6 x_4$ \\
  \bottomrule
\end{tabular}

\caption{First few polynomials $\psi_\nu$.} \label{tab:psi}
\end{center}
\end{table}

We proceed according to the schema below in order to derive congruences of the 
Wilson quotient $\WQ_p$ for any prime power in terms of the Bernoulli numbers. 
\begin{figure}[H]
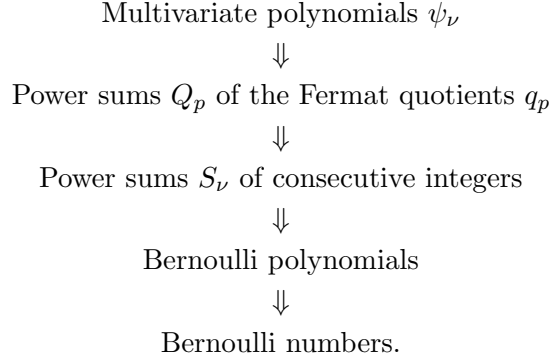

\begin{center}
\setstretch{1.2}
\begin{tabular}{c}
  Multivariate polynomials $\psi_\nu$ \\
  $\Downarrow$ \\
  Power sums $Q_p$ of the Fermat quotients $q_p$ \\
  $\Downarrow$ \\
  Power sums $S_\nu$ of consecutive integers \\
  $\Downarrow$ \\
  Bernoulli polynomials \\
  $\Downarrow$ \\
  Bernoulli numbers.
\end{tabular}
\end{center}
\caption{Schema of dependencies.} \label{fig:scheme}
\end{figure}

For $n \geq 1$ and any prime $p \geq 5$, define the divided Bernoulli numbers 
\begin{equation} \label{eq:bnn-1}
  \BNN_n = \frac{\BN_{n(p-1)}+\frac{1}{p}-1}{n(p-1)}, 
\end{equation}
which are $p$-integral and satisfy the Kummer congruences (following from \eqref{eq:wp-car})
\[
  \BNN_n \equiv \BNN_m \smod{p} \quad (n, m \geq 1).
\]
For $n=1$ and primes $p=2,3$, we extend the definition \eqref{eq:bnn-1} for $\BNN_1$.
Similarly, define
\begin{equation} \label{eq:bnn-2}
  \BNN_{n,2} = \frac{\BN_{n(p-1)-2}}{n(p-1)-2},
\end{equation}
which are also $p$-integral and obey the Kummer congruences. Throughout the paper, 
an \textsl{overbar}, e.g., $\BNN_n$, indicates a \textsl{divided} Bernoulli number.

The main results of the paper establish supercongruences of the Wilson quotient $\WQ_p$,
which can be extended to any higher prime power along the same methods.

\begin{theorem} \label{thm:main}
Let $p$ be a prime. Then we have
\begin{align*}
  \WQ_p &\equiv -\BNN_1 \smod{p}. \\
\shortintertext{For $p \geq 5$, we have}
  \WQ_p &\equiv -2 \BNN_1 + \BNN_2 - \tfrac{1}{2} p \, \BNN_1^2 \smod{p^2}, \\
  \WQ_p &\equiv -3 \BNN_1 + 3 \BNN_2 - \BNN_3
  + p \, \mleft( -\tfrac{3}{2} \BNN_1^2 + \BNN_1 \BNN_2 \mright)
  - p^2 \mleft( \tfrac{1}{6} \BNN_1^3 + \tfrac{1}{3} \BNN_{1,2} \mright) \smod{p^3}. 
\end{align*}
Equivalently, we have
\[
  (p-1)! \equiv -1 + p \WQ_p \smod{p^{r+1}}
\]
for $p$, $r \in \set{1,2,3}$, and $\WQ_p$ as above, respectively.
\end{theorem}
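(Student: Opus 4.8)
The plan is to follow the schema of Figure~\ref{fig:scheme}, descending from the polynomials $\psi_\nu$ down to Bernoulli numbers. I start from Theorem~\ref{thm:kel}(2), which for $r\in\set{1,2,3}$ writes $\WQ_p$ modulo $p^r$ as $\sum_{\nu=1}^{r}\tfrac{p^{\nu-1}}{\nu!}\,\psi_\nu(Q_p(1),\ldots,Q_p(\nu))$ with the explicit polynomials $\psi_1,\psi_2,\psi_3$ read off from Table~\ref{tbl:psi}. Since the $\nu$-th summand carries the factor $p^{\nu-1}$, it suffices to know $\psi_\nu$ modulo $p^{\,r-\nu+1}$; concretely, for the $\smod{p^3}$ statement I need $Q_p(1)$ to precision $p^3$, the pair $Q_p(1),Q_p(2)$ to precision $p^2$, and $Q_p(1),Q_p(2),Q_p(3)$ to precision $p$. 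Fixing these precisions at the outset controls the entire computation, and the $\smod{p}$ and $\smod{p^2}$ statements come out as truncations of the same work.

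Next I reduce the $Q_p(n)$ to power sums of consecutive integers. Writing $q_p(a)^n=p^{-n}(a^{p-1}-1)^n$ by \eqref{eq:qp-def} and expanding binomially gives
\[
  Q_p(n)=\frac{1}{p^{n}}\sum_{k=0}^{n}\binom{n}{k}(-1)^{n-k}S_{k(p-1)},\qquad S_m=\sum_{a=1}^{p-1}a^{m},
\]
with $S_0=p-1$. Each power sum is then evaluated through Bernoulli polynomials by Faulhaber's formula,
\[
  S_m=\frac{\BN_{m+1}(p)-\BN_{m+1}}{m+1}=\frac{1}{m+1}\sum_{j=0}^{m}\binom{m+1}{j}\BN_j\,p^{\,m+1-j},
\]
which I expand only up to the power of $p$ dictated by the precision above.

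The arithmetic core is the passage to divided Bernoulli numbers. In the expansion of $S_{k(p-1)}$ the Bernoulli numbers $\BN_j$ with $(p-1)\mid j$ carry a simple pole at $p$ by von Staudt--Clausen; the relevant ones are the top indices $j=k(p-1)$, which I absorb via \eqref{eq:bnn-1} in the form $\BN_{k(p-1)}=k(p-1)\,\BNN_k-\tfrac1p+1$, so that their poles cancel against the constant $-1$ coming from $S_0=p-1$. The indices $j\equiv-2\smod{p-1}$, namely $j=k(p-1)-2$, contribute the $p$-integral quantities $\BN_{k(p-1)-2}=(k(p-1)-2)\,\BNN_{k,2}$ from \eqref{eq:bnn-2}, and these are the source of the $\BNN_{1,2}$ term in the $\smod{p^3}$ result; all remaining $\BN_j$ are $p$-integral and enter only at higher order. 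The Kummer congruences $\BNN_k\equiv\BNN_1$ and $\BNN_{k,2}\equiv\BNN_{1,2}\smod{p}$ let me collapse the various indices onto $\BNN_1,\BNN_2,\BNN_3$ and $\BNN_{1,2}$ at exactly the precision carried by each term. Substituting the resulting expressions for $Q_p(1),Q_p(2),Q_p(3)$ back into $\psi_1,\psi_2,\psi_3$ and collecting powers of $p$ then yields the three congruences.

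Finally, the equivalence with $(p-1)!$ is immediate: by \eqref{eq:wp-def} one has the exact identity $(p-1)!=p\,\WQ_p-1$, so each congruence for $\WQ_p$ modulo $p^{r}$ is literally the same statement as the asserted congruence for $(p-1)!$ modulo $p^{r+1}$. I expect the main obstacle to be the $p$-adic bookkeeping in the core step: one must simultaneously carry $Q_p(1)$ to order $p^3$, $Q_p(2)$ to order $p^2$ and $Q_p(3)$ to order $p$, determine which Faulhaber terms survive after the division by $p^n$, and keep the pole contributions of the $\BN_{k(p-1)}$ cleanly separated from the regular part, since it is precisely the interaction of these pieces through the \emph{nonlinear} polynomials $\psi_2,\psi_3$ that produces the coefficients $-\tfrac12 p\,\BNN_1^2$, $\tfrac16\BNN_1^3$ and $\tfrac13\BNN_{1,2}$.
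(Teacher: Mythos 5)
Your route is the paper's own route: Theorem~\ref{thm:kel} with the explicit $\psi_1,\psi_2,\psi_3$ from Table~\ref{tbl:psi}, then $Q_p(n)$ as an alternating sum of power sums $S_{k(p-1)}$ (this is Lemma~\ref{lem:qp-diff}), then Faulhaber's formula, von Staudt--Clausen, and Kummer congruences to collapse indices --- exactly the schema of Figure~\ref{fig:scheme}. The precision budget you fix at the outset ($Q_p(1)$ mod $p^3$, $Q_p(2)$ mod $p^2$, $Q_p(3)$ mod $p$) and the cancellation of the top poles against $S_0=p-1$ are also what the paper does. The gap is in the sentence ``all remaining $\BN_j$ are $p$-integral and enter only at higher order.'' For $k\geq 2$ the Faulhaber expansion of $S_{k(p-1)}$ contains terms with Bernoulli index $j=\ell(p-1)$ for $0<\ell<k$, and these $\BN_{\ell(p-1)}$ are precisely \emph{not} $p$-integral: they carry a pole by von Staudt--Clausen. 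Whether they enter only at higher order is the delicate point, and at the stated lower bound $p=5$ the naive estimate fails. For instance, $S_{2(p-1)}$ contains the term $\binom{2(p-1)}{p-1}\BN_{p-1}\,p^{p-1}$, whose contribution to $Q_p(2)=p^{-2}(\cdots)$ has $p$-adic valuation $\ord_p\binom{2(p-1)}{p-1}+p-4$; if the binomial coefficient were a unit this would be $1$ at $p=5$, which is not negligible modulo $p^2$, and similar borderline terms occur in $Q_p(3)$ modulo $p$.

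What rescues the computation is the additional fact that $\ord_p\binom{d(p-1)}{p-1}\geq 1$ whenever $d\not\equiv 1\smod{p}$ (the paper's Lemma~\ref{lem:binom-mod}); this buys exactly the one extra power of $p$ needed to push all the intermediate-pole contributions below $p^{p-2}$, hence below the working precision for $p\geq 5$. This is the content of the case $R_2(p)$ in Proposition~\ref{prop:sh-eval} and of the parameter $\delta$ in Corollary~\ref{cor:sh-cases}, and it is the genuinely technical part of the paper's proof. As written, your plan would either discard these pole terms without justification or, bounded naively, would only establish the congruences for somewhat larger primes. The rest --- identifying $j=k(p-1)-2$ as the source of $\BNN_{1,2}$, reducing by Kummer congruences, and substituting into $\psi_1,\psi_2,\psi_3$ --- matches Proposition~\ref{prop:qp-bnp}, Lemma~\ref{lem:qp-bnn}, and Propositions~\ref{prop-p1}--\ref{prop-p3}, and the final equivalence with $(p-1)!$ via \eqref{eq:wp-def} is correct as you state it.
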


We additionally calculate the next case, which is even longer.

\begin{theorem} \label{thm:main2}
Let $p \geq 7$ be a prime. Then we have
\begin{align*}
  \WQ_p &\equiv -4 \BNN_1 + 6 \BNN_2 - 4 \BNN_3 + \BNN_4
  + p \, \mleft( -3 \BNN_1^2 + 4 \BNN_1 \BNN_2 - \BNN_1 \BNN_3 - \tfrac{1}{2} \BNN_2^2 \mright) \\
  &\quad + p^2 \mleft( -\tfrac{2}{3} \BNN_1^3 + \tfrac{1}{2} \BNN_1^2 \BNN_2
  - \tfrac{2}{3} \BNN_{1,2} + \tfrac{1}{3} \BNN_{2,2} \mright)
  - p^3 \mleft( \tfrac{1}{24} \BNN_1^4 + \tfrac{1}{3} \BNN_1 \BNN_{1,2} \mright) \smod{p^4}. 
\end{align*}
Equivalently, we have
\[
  (p-1)! \equiv -1 + p \WQ_p \smod{p^5}.
\]
\end{theorem}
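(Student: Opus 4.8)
The plan is to specialize Theorem~\ref{thm:kel} to $r=4$ and then feed the power sums $Q_p(n)$ through the chain of Figure~\ref{fig:scheme} until only Bernoulli numbers remain. Reading $\psi_1,\dots,\psi_4$ off Table~\ref{tbl:psi}, the statement of Theorem~\ref{thm:kel} becomes
\[
  \WQ_p \equiv Q_p(1) + \tfrac{p}{2}\,\psi_2 + \tfrac{p^2}{6}\,\psi_3 + \tfrac{p^3}{24}\,\psi_4 \smod{p^4},
\]
where each $\psi_\nu$ is evaluated at $(Q_p(1),\dots,Q_p(\nu))$, and since $p\ge 7$ every $\nu!$ with $\nu\le 4$ is a $p$-unit. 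The prefactor $p^{\nu-1}$ means I only need $\psi_\nu$ modulo $p^{5-\nu}$; concretely it suffices to determine $Q_p(1)\smod{p^4}$, $Q_p(2)\smod{p^3}$, $Q_p(3)\smod{p^2}$, and $Q_p(4)\smod{p}$. This is the same computation already carried out one order lower for Theorem~\ref{thm:main}, now pushed to the next power of $p$.

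The first reduction turns each $Q_p(n)$ into power sums of consecutive integers. By \eqref{eq:qp-def} we have $q_p(a)^n = p^{-n}(a^{p-1}-1)^n$, so the binomial theorem gives
\[
  Q_p(n) = \frac{1}{p^n}\sum_{j=0}^{n}(-1)^{n-j}\binom{n}{j}\,S_{j(p-1)},
  \qquad S_m := \sum_{a=1}^{p-1}a^m .
\]
Here $Q_p(n)$ is patently an integer, being a sum of powers of the integers $q_p(a)$; the content of the identity is that it expresses $Q_p(n)$ through the $S_m$, whose $p$-adic expansions are governed by Bernoulli numbers. The alternating binomial weights form an $n$-th finite difference in $j$, and this structure is what will force the various poles introduced below to cancel.

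The second reduction applies Faulhaber's formula, i.e. the expansion of the Bernoulli polynomial $\BN_{m+1}$ evaluated at $p$, in the form
\[
  S_m = \sum_{k=0}^{m}\frac{1}{k+1}\binom{m}{k}\,\BN_{m-k}\,p^{k+1}.
\]
Substituting $m=j(p-1)$ and keeping only what survives at the target precision, the odd-index Bernoulli numbers vanish and the terms with $k\ge 4$ carry too many powers of $p$ to matter. What remains is the term $k=0$, contributing $p\,\BN_{j(p-1)} = p\,\mleft(j(p-1)\BNN_j-\tfrac1p+1\mright)$ by \eqref{eq:bnn-1}, and the term $k=2$, contributing a multiple of $\BN_{j(p-1)-2}=(j(p-1)-2)\BNN_{j,2}$ by \eqref{eq:bnn-2}. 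This is precisely why only the divided Bernoulli numbers $\BNN_j$ and $\BNN_{j,2}$ occur in the final formula.

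The main obstacle is the $p$-adic bookkeeping in these last steps. Each $\BN_{j(p-1)}$ carries a pole $-\tfrac1p$ coming from von Staudt--Clausen, and these poles together with the prefactor $p^{-n}$ must cancel before any $p$-integral expression in the $\BNN_j$ emerges; the cancellation is exactly delivered by the finite-difference structure, but one must track the surviving contributions of orders $p^{-1},p^0,p^1$ simultaneously for all $j\le 4$. Moreover, the Kummer congruences give $\BNN_n\equiv\BNN_m\smod p$, so differences $\BNN_n-\BNN_m$ are divisible by $p$; this is needed to see that apparent poles of the shape $\tfrac1p(\BNN_n-\BNN_m)$ are harmless and, after multiplication by the $p$-powers from the $\psi_\nu$ prefactors, recombine into honest polynomials in the $\BNN_j$ with $p$-integral coefficients. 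Assembling these expansions, substituting back into the displayed combination of $\psi_1,\dots,\psi_4$, and collecting by powers of $p$ yields the asserted congruence for $\WQ_p\smod{p^4}$; the equivalent statement for $(p-1)!\smod{p^5}$ then follows at once by multiplying through by $p$ in \eqref{eq:wp-def}.
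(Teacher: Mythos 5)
Your route coincides with the paper's own: it is exactly the chain of Figure~\ref{fig:scheme} --- specialize Theorem~\ref{thm:kel} to $r=4$, observe that $Q_p(n)$ is only needed modulo $p^{5-n}$, write $Q_p(n)$ as an $n$-th finite difference of the power sums $S_{j(p-1)}(p)$ (this is Lemma~\ref{lem:qp-diff}), expand those via \eqref{eq:sn-bn-2}, and let the finite-difference structure cancel the von Staudt--Clausen poles. So there is no methodological divergence to report; the problem is that your argument stops precisely where the work begins.

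Two concrete gaps. First, the dismissal of the higher Faulhaber terms (``the terms with $k\ge 4$ carry too many powers of $p$ to matter'') is unjustified and is in fact the delicate point: for $j\ge 2$ and $m=j(p-1)$, the terms with $k=\ell(p-1)$, $1\le\ell<j$, have $\ord_p \BN_{m-k}=-1$, and the factor $1/(k+1)$ can lose a further power of $p$. Controlling these is the content of Proposition~\ref{prop:sh-eval} (the $R_1,R_2,R_3$ analysis using Lemmas~\ref{lem:log-ord} and \ref{lem:binom-mod}) and of Corollary~\ref{cor:sh-cases}, and it is exactly here that the hypothesis $p\ge 7$ enters (via the bound $p\ge r+3-\delta$ with $r=4$); your sketch never explains why $p\ge 7$ suffices, although the theorem is stated with that bound. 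Second, and more seriously, the closing sentence ``assembling these expansions \dots\ yields the asserted congruence'' asserts the entire computational content of the theorem without performing it. The theorem is a specific thirteen-term identity; to prove it one must actually derive the explicit congruences for $Q_p(1)$, $\tfrac12 p\,Q_p(2)$, $\tfrac13 p^2 Q_p(3)$, $\tfrac14 p^3 Q_p(4)$ modulo $p^4$ in terms of $\BNN_j$ and $\BNN_{j,2}$ (the paper's Lemmas~\ref{lem:qp-bnn} and \ref{lem:qp-bnn-2}, which rest on Propositions~\ref{prop:qp-bnp} and \ref{prop:qp-bnp-2}), substitute them into $\psi_1,\dots,\psi_4$, and reduce the resulting cross terms such as $(\BNN_1-\BNN_2)^2$ by the generalized Kummer congruences of Proposition~\ref{prop:gen-congr}. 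None of the coefficients $-4,6,-4,1,-3,\dots,-\tfrac1{24},-\tfrac13$ is produced or verified, so the stated congruence is not actually established.
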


\begin{remark} \label{rem:wp-red}
By the generalized Kummer congruences $\!\smod{p^r}$ (see Proposition~\ref{prop:gen-congr}), 
one can easily show the reduction of terms given by Theorems~\ref{thm:main} and \ref{thm:main2}
such that
\[
  \WQ_p \smod{p^4} \;\longmapsto\; \WQ_p \smod{p^3} \;\longmapsto\;
  \WQ_p \smod{p^2} \;\longmapsto\; \WQ_p \smod{p}.
\]
\end{remark}

As a by-product of the theory, we obtain supercongruences of the power sums $Q_p$ 
of the Fermat quotients $q_p$, as follows.

\begin{theorem} \label{thm:main3}
Let $p$ be an odd prime. Then we have 
\begin{alignat*}{3}
  Q_p(1) &\equiv (p-1) \, \BNN_1 - p^2 \, \BNN_{1,2} + \tfrac{11}{6} p^3 \, \BNN_{1,2}
  && \smod{p^4} &\quad& (p \geq 5) \\
  &\equiv (p-1) \, \BNN_1 - p^2 \, \BNN_{1,2}
  && \smod{p^3} && (p \geq 5) \\
  &\equiv (p-1) \, \BNN_1
  && \smod{p^2} && (p \geq 5) \\
  &\equiv -\BNN_1
  && \smod{p} && (p \geq 3), 
\end{alignat*}
\begin{alignat*}{3}
  \tfrac{1}{2} p \, Q_p(2) &\equiv (p-1) (\BNN_2 - \BNN_1)
  + p^2 \mleft( \BNN_{1,2} - 2 \BNN_{2,2} \mright) + \tfrac{5}{2} p^3 \, \BNN_{1,2}
  && \smod{p^4} &\quad& (p \geq 5) \\
  &\equiv (p-1) (\BNN_2 - \BNN_1) - p^2 \, \BNN_{1,2}
  && \smod{p^3} && (p \geq 5) \\
  &\equiv -(\BNN_2 - \BNN_1)
  && \smod{p^2} && (p \geq 3), 
\end{alignat*}
\begin{alignat*}{3}
  \tfrac{1}{3} p^2 \, Q_p(3) &\equiv (p-1) (\BNN_3 - 2 \BNN_2 + \BNN_1)
  + p^2 \mleft( \tfrac{7}{3} \BNN_{1,2} - \tfrac{8}{3} \BNN_{2,2} \mright) + p^3 \, \BNN_{1,2}
  && \smod{p^4} &\quad& (p \geq 7) \\
  &\equiv -(\BNN_3 - 2 \BNN_2 + \BNN_1) - \tfrac{1}{3} p^2 \, \BNN_{1,2}
  && \smod{p^3} && (p \geq 5), 
\end{alignat*}
and
\begin{alignat*}{3}
  \tfrac{1}{4} p^3 \, Q_p(4) &\equiv -(\BNN_4 - 3 \BNN_3 + 3 \BNN_2 - \BNN_1)
  + p^2 \mleft( \BNN_{1,2} - \BNN_{2,2} \mright)
  && \smod{p^4} &\quad& (p \geq 7). 
\end{alignat*}
\end{theorem}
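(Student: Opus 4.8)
The plan is to run the lower half of the schema in Figure~\ref{fig:scheme}, converting each $Q_p(n)$ into power sums of consecutive integers and then into divided Bernoulli numbers. First I would expand $q_p(a)^n = p^{-n}(a^{p-1}-1)^n$ by the binomial theorem and sum over $a$, obtaining
\[
  Q_p(n) = \frac{1}{p^n}\sum_{k=0}^n \binom{n}{k}(-1)^{n-k}\, S_{k(p-1)},
  \qquad S_m := \sum_{a=1}^{p-1} a^m,
\]
so that $\tfrac1n p^{\,n-1} Q_p(n) = \tfrac{1}{np}\sum_{k=0}^n \binom{n}{k}(-1)^{n-k} S_{k(p-1)}$. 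Hence it suffices to control the right-hand sum modulo $p^{5}$, and the $k=0$ term contributes $(-1)^n S_0=(-1)^n(p-1)$.

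Next I would insert the Faulhaber expansion, writing $\ell = m+1-i$ for the exponent of $p$:
\[
  S_m = \frac{1}{m+1}\sum_{i=0}^{m}\binom{m+1}{i}\BN_i\, p^{\,m+1-i}
      = \sum_{\ell\ge 1}\frac{1}{m+1}\binom{m+1}{\ell}\BN_{m+1-\ell}\, p^{\ell},
\]
with $m=k(p-1)$ even. Since $\BN_j=0$ for odd $j\ge 3$, only odd $\ell$ survive; the terms with $\ell\ge 5$, together with the top terms carrying $\BN_1,\BN_0$ (of $p$-order $\ge m\ge p-1$), all lie beyond $p^5$ once $p$ is large enough, which is exactly where the thresholds $p\ge 5,7$ enter. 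Thus modulo $p^5$ only $\ell=1$ and $\ell=3$ remain, giving
\[
  S_{k(p-1)} \equiv \BN_{k(p-1)}\, p + \tfrac16\, m(m-1)\,\BN_{k(p-1)-2}\, p^3 \smod{p^5}.
\]

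Now I would pass to divided Bernoulli numbers. For $\ell=1$, von Staudt--Clausen and \eqref{eq:bnn-1} give $\BN_{k(p-1)}=k(p-1)\BNN_k-\tfrac1p+1$, so $\BN_{k(p-1)}\,p=k(p-1)p\,\BNN_k-1+p$; summing $\binom{n}{k}(-1)^{n-k}$ against the constant part $-1+p$ yields $(-1)^{n+1}(p-1)$, which cancels the $k=0$ contribution $(-1)^n(p-1)$. What survives from $\ell=1$ is the unit multiple $\tfrac{p-1}{n}\sum_{k=1}^n\binom{n}{k}(-1)^{n-k}k\,\BNN_k$. For $\ell=3$, the index $k(p-1)-2\equiv-2\pmod{p-1}$ is never divisible by $p-1$ (for $p\ge 5$), so $\BN_{k(p-1)-2}$ is $p$-integral and equals $(k(p-1)-2)\BNN_{k,2}$ by \eqref{eq:bnn-2}; this produces the $p^2$- and $p^3$-terms through the expansion of $m(m-1)(m-2)=k(p-1)(k(p-1)-1)(k(p-1)-2)$.

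The raw output now involves $\BNN_1,\dots,\BNN_n$ and $\BNN_{1,2},\dots,\BNN_{n,2}$, whereas the stated forms use only the first two of each, so the final step is to collapse these via the generalized Kummer congruences of Proposition~\ref{prop:gen-congr}, which I expect to amount to $\Delta^a\BNN_\bullet\equiv 0\pmod{p^a}$ and $\Delta^a\BNN_{\bullet,2}\equiv 0\pmod{p^a}$ for the forward difference $\Delta$ in the index. For example, at $n=3$ one replaces $\BNN_{3,2}\equiv 2\BNN_{2,2}-\BNN_{1,2}\pmod{p^2}$ to turn the raw $-\BNN_{1,2}+4\BNN_{2,2}-\tfrac{10}{3}\BNN_{3,2}$ into $\tfrac73\BNN_{1,2}-\tfrac83\BNN_{2,2}$, while at $n=4$ one uses $\Delta^3\BNN_\bullet\equiv 0\pmod{p^3}$ to replace the prefactor $p-1$ of the third difference by $-1$; the lower-precision congruences then follow by truncation and the first Kummer congruence, in the spirit of Remark~\ref{rem:wp-red}. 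The main obstacle is precisely this $p$-adic bookkeeping: expanding $m(m-1)(m-2)$ to the right order, tracking which $\BNN_\bullet,\BNN_{\bullet,2}$ feed each power of $p$, and invoking the higher Kummer congruences at exactly the right modulus so that no term is dropped prematurely—while simultaneously verifying the prime thresholds $p\ge 5,7$ guarantee that the discarded $\ell\ge 5$ and top terms really are beyond the target modulus and that every $\BNN_{k,2}$ is defined.
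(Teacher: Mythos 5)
Your plan is essentially the paper's own route: the binomial expansion of $q_p(a)^n$ is Lemma~\ref{lem:qp-diff}, the truncated Faulhaber expansion with the tail estimate is Proposition~\ref{prop:sh-eval} and Corollary~\ref{cor:sh-cases} (where the dangerous terms $\ell\equiv 1\smod{p-1}$ with $\ord_p \BN_{m+1-\ell}=-1$ are exactly what forces the thresholds $p\geq 5,7$), and the passage to $\BNN_k$, $\BNN_{k,2}$ with reduction by the generalized Kummer congruences is Propositions~\ref{prop:qp-bnp} and \ref{prop:qp-bnp-2}; your intermediate coefficients (e.g., $-\BNN_{1,2}+4\BNN_{2,2}-\tfrac{10}{3}\BNN_{3,2}$ at $n=3$ and the replacement of $p-1$ by $-1$ at $n=4$) agree with the paper's. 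The only items left implicit are the careful $p$-adic bounding of the discarded terms, which you correctly flag as the main obstacle, and the separate check of the two $p\geq 3$ cases, which the paper also handles by direct verification.
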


The rest of the paper is organized as follows. The next two sections deal with the
Kummer congruences of the Bernoulli numbers, as well as with congruences of the 
power sums $Q_p$. Sections~\ref{sec:proof} and \ref{sec:proof2} contain the proofs 
of the main theorems, respectively. Section~\ref{sec:dis} concludes the paper with 
a discussion.


\section{Preliminaries}

Let $\NN = \set{1,2,3,\ldots}$ be the set of positive integers.
Let $\ZZ_p$ be the ring of $p$-adic integers, and $\QQ_p$ be the field of 
$p$-adic numbers. Define $\ord_p s$ as the $p$-adic valuation of $s \in \QQ_p$.
The ultrametric absolute value $\norm{\cdot}_p$ is defined by 
$\norm{s}_p = p^{-\ord_p s}$ on $\QQ_p$.

The Bernoulli polynomials $\BN_n(x)$ are defined by the generating function
\begin{equation} \label{eq:bnp-gf}
  \frac{t e^{xt}}{e^t - 1} = \sum_{n \geq 0} \BN_n(x) \frac{t^n}{n!} \quad (|t| < 2\pi).
\end{equation}
These polynomials are Appell polynomials (see \cite{Appell:1880}) 
and thus given by the formula
\begin{equation} \label{eq:bnp-def}
  \BN_n(x) = \sum_{k=0}^{n} \binom{n}{k} \BN_{n-k} \, x^k \quad (n \geq 0),
\end{equation}
where $\BN_n = \BN_n(0) \in \QQ$ is the $n$th Bernoulli number.
From the generating function \eqref{eq:bnp-gf}, it easily follows that $\BN_n$ 
vanishes for odd $n \geq 3$. First few numbers are $\BN_0 = 1$, $\BN_1 = -\frac{1}{2}$, 
and $\BN_2 = \frac{1}{6}$. Since $\BN_0 = 1$, all Bernoulli polynomials $\BN_n(x)$ 
are monic polynomials by \eqref{eq:bnp-def}. 

The divided Bernoulli numbers $\BN_n/n$ are $p$-integral whenever ${p-1 \nmid n}$ 
for even $n \geq 2$. The Kummer congruences, introduced by 
Kummer~\cite{Kummer:1851} in 1851, state that
\begin{equation} \label{eq:bn-congr}
  \frac{\BN_n}{n} \equiv \frac{\BN_m}{m} \smod{p},
\end{equation}
if $n \equiv m \not\equiv 0 \smod{p-1}$ and $n, m \in 2\NN$; implicitly requiring 
the $p$-integral property. Let 
\[
  \zeta(s) = \sum_{\nu \geq 1} \nu^{-s} \quad (s \in \CC, \; \real s > 1)
\] 
be the Riemann zeta function. Its functional equation leads to
\[
  \zeta(1-n) = -\frac{\BN_n}{n} \quad (n \geq 2).
\]

By the von~Staudt--Clausen theorem (announced by Clausen~\cite{Clausen:1840} 
without proof in 1840, while von~Staudt~\cite{Staudt:1840} then gave a rigorous 
proof at the same time), the Bernoulli numbers satisfy that
\[
  \BN_n + \sum_{p-1 \mids n} \frac{1}{p} \in \ZZ \quad (n \in 2\NN).
\]
As a consequence, we have $\BN_n + \tfrac{1}{p} \in \ZZ_p$ when $p-1 \mid n$.
By Carlitz~\cite{Carlitz:1953}, we define the $p$-integral Bernoulli numbers 
for odd prime $p$ and $n \in 2\NN_0$ by
\[
  \BNP_n =
  \begin{cases}
    0, & \text{if $n = 0$}; \\
    \BN_n + \tfrac{1}{p} - 1, & \text{if $n > 0$ and $p-1 \mid n$}; \\
    \BN_n, & \text{otherwise}.
  \end{cases}
\]

The divided $p$-integral Bernoulli numbers $\BNP_n/n$ have the following properties.
(Johnson~\cite{Johnson:1975} defined these numbers as $\beta_n$.)

\begin{prop} \label{prop:bnp-int}
Let $p \geq 5$ be a prime, ${n \in 2\NN}$, and ${n' \equiv n \smod{p-1}}$ with 
$0 \leq n' \leq p-3$. We have the following statements:
\begin{enumerate}
\item 
\[
  \ord_p \BNP_n \geq \ord_p n; 
\]
\item 
\[
  -\frac{\BNP_n}{n} \equiv
  \begin{cases}
    \WQ_p \hspace*{5.65ex} \smod{p}, & \text{if $n' = 0$}; \\
    \zeta(1-n') \smod{p}, & \text{otherwise}.
  \end{cases}
\]
\end{enumerate}
\end{prop}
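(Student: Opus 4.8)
The plan is to split the argument according to whether $p-1 \mid n$, which is precisely the dichotomy $n' = 0$ versus $0 < n' \le p-3$ appearing in the statement, and to dispatch parts (1) and (2) simultaneously inside each case. In the first case I would reduce everything to the standard Kummer congruences together with the functional equation of $\zeta$; in the second case essentially the whole weight of the argument rests on Carlitz's supercongruence \eqref{eq:wp-car}, which supplies the delicate divisibility that von~Staudt--Clausen cannot.

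First I would treat the case $0 < n' \le p-3$, so that $p-1 \nmid n$ and hence $\BNP_n = \BN_n$ by definition. Since $n$ is even with $p-1 \nmid n$, the stated $p$-integrality of $\BN_n/n$ gives $\ord_p(\BN_n/n) \ge 0$, i.e. $\ord_p \BNP_n \ge \ord_p n$, which is part (1). For part (2) I would note that $p-1$ is even (as $p$ is odd), so $n' \equiv n \smod{p-1}$ forces $n'$ even, whence $2 \le n' \le p-3$ and $n' \not\equiv 0 \smod{p-1}$. The Kummer congruences \eqref{eq:bn-congr} then yield $\BN_n/n \equiv \BN_{n'}/n' \smod p$, and the functional equation $\zeta(1-n') = -\BN_{n'}/n'$ (valid since $n' \ge 2$) gives $-\BNP_n/n \equiv -\BN_{n'}/n' = \zeta(1-n') \smod p$, as required.

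Next I would treat the case $n' = 0$, so that $p-1 \mid n$. Write $n = m(p-1)$ and $m = p^k r$ with $p \nmid r$, so that $\ord_p n = \ord_p m = k$ and $n = r p^k (p-1)$. Here $\BNP_n = \BN_n + \tfrac1p - 1$, and Carlitz's congruence \eqref{eq:wp-car} states exactly that $\frac{\BN_n + \frac1p - 1}{p^k} \equiv r\,\WQ_p \smod p$. The right-hand side is $p$-integral, so $\ord_p \BNP_n = \ord_p(\BN_n + \tfrac1p - 1) \ge k = \ord_p n$, giving part (1). Dividing by $n = r p^k (p-1)$ and using $p-1 \equiv -1 \smod p$, I would then obtain $-\BNP_n/n = -\tfrac{1}{r(p-1)} \cdot \frac{\BN_n + \frac1p - 1}{p^k} \equiv -\tfrac{r\,\WQ_p}{r(p-1)} = -\tfrac{\WQ_p}{p-1} \equiv \WQ_p \smod p$, which is part (2). (Equivalently, one may simply quote the $p$-integrality and Kummer congruences of $\BNN_m$ recorded after \eqref{eq:bnn-1}, since $\BNP_n/n = \BNN_m$.)

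The only genuinely nontrivial ingredient is the case $n' = 0$: von~Staudt--Clausen alone yields merely $\BN_n + \tfrac1p - 1 \in \ZZ_p$, that is $\ord_p \BNP_n \ge 0$, whereas part (1) demands the sharper bound $\ord_p \BNP_n \ge \ord_p n$, and the extra gain of $p^k$ is exactly the content of \eqref{eq:wp-car}. So the main obstacle — already resolved in the cited work of Carlitz — is the $p$-power divisibility for indices divisible by $p-1$; everything else is a routine reduction through the Kummer congruences and the $\zeta$ functional equation.
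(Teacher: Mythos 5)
Your proof is correct and follows essentially the same route as the paper: the case $p-1 \nmid n$ is handled by $p$-integrality of $\BN_n/n$, the Kummer congruences, and the functional equation of $\zeta$, while the case $p-1 \mid n$ rests on Carlitz's supercongruence \eqref{eq:wp-car} applied to the decomposition $n = r p^k(p-1)$. Your added remarks (evenness of $n'$, and the observation that von Staudt--Clausen alone is insufficient for part (1)) only make the same argument more explicit.
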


\begin{proof}
If $n' \neq 0$, case~(1) follows from $\BN_n/n \in \ZZ_p$, while case~(2) follows 
from $\zeta(1-n') = -\BN_{n'}/n'$ and the Kummer congruences \eqref{eq:bn-congr}.
Now, assume $n' = 0$ and consider the decomposition $n = r p^k (p-1)$ with 
$k \geq 0$, $r \geq 1$, and $p \nmid r$. Then $\ord_p n = k$ and case~(1) is 
deduced from \eqref{eq:wp-car}. For case~(2), divide \eqref{eq:wp-car} by 
$r \in \ZZ_p^\times$ and multiply the right-hand side of \eqref{eq:wp-car} 
by $-1/(p-1)$, implying the result.
\end{proof}

With Carlitz's~\cite{Carlitz:1953} result, later reproved by Johnson~\cite{Johnson:1975},
we arrive at the unrestricted Kummer congruences, as follows.
 
\begin{corl}
Let $p \geq 5$ be a prime. For $n, m \in 2\NN$ with $n \equiv m \smod{p-1}$, 
we have
\[
  \frac{\BNP_n}{n} \equiv \frac{\BNP_m}{m} \smod{p}.
\]
\end{corl}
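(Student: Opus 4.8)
The plan is to derive the corollary directly from Proposition~\ref{prop:bnp-int}, whose part~(2) already isolates the dependence of the divided numbers $\BNP_n/n$ on the residue class of $n$ modulo $p-1$. First I would record that part~(1) makes the asserted congruence meaningful in the first place: since $\ord_p \BNP_n \geq \ord_p n$, the quotient $\BNP_n/n$ lies in $\ZZ_p$, so comparing $\BNP_n/n$ and $\BNP_m/m$ modulo $p$ is well-defined.

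The core step is the bookkeeping of representatives. Given $n, m \in 2\NN$ with $n \equiv m \smod{p-1}$, introduce the canonical representative $n'$ with $0 \leq n' \leq p-3$ and $n' \equiv n \smod{p-1}$, and likewise $m'$ for $m$. Because reduction modulo $p-1$ into the fixed window $[0,p-3]$ is single-valued, and because $n \equiv m \smod{p-1}$, I would conclude $n' = m'$. This is the only point that requires a word of care, and it is entirely elementary.

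Now I would apply Proposition~\ref{prop:bnp-int}(2) to $n$ and to $m$ separately and compare right-hand sides. In the case $n' = 0$ (equivalently $p-1 \mid n$), the proposition gives $-\BNP_n/n \equiv \WQ_p \smod{p}$ and $-\BNP_m/m \equiv \WQ_p \smod{p}$; in the case $n' \neq 0$, it gives $-\BNP_n/n \equiv \zeta(1-n') \smod{p}$ and $-\BNP_m/m \equiv \zeta(1-m') \smod{p}$ with $\zeta(1-n') = \zeta(1-m')$ since $n' = m'$. In either case the two right-hand sides coincide, so by transitivity of congruence modulo $p$ we obtain $-\BNP_n/n \equiv -\BNP_m/m \smod{p}$, and hence $\BNP_n/n \equiv \BNP_m/m \smod{p}$, as claimed.

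I expect no genuine obstacle here. All the substantive content — in particular the delicate $n' = 0$ case, which rests on Carlitz's congruence \eqref{eq:wp-car} and ties the divided number to the Wilson quotient $\WQ_p$ — has already been absorbed into Proposition~\ref{prop:bnp-int}. The corollary is essentially a repackaging of that proposition: it merely observes that the two congruence classes produced by part~(2) are governed solely by the common residue $n' = m'$, so the numbers must agree modulo $p$.
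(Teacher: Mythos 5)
Your proof is correct and is essentially the paper's own argument: the corollary is stated there without a separate proof, as an immediate consequence of Proposition~\ref{prop:bnp-int}, and applying part~(2) to $n$ and $m$ with the common representative $n' = m'$ (with part~(1) guaranteeing $p$-integrality) is exactly that deduction. The only detail you could make explicit is that the representative in $[0,p-3]$ exists because $n$ is even while $p-2$ is odd, so $n \not\equiv p-2 \smod{p-1}$; uniqueness is clear as you say.
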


An odd prime $p$ is called a Wilson prime if $\WQ_p \equiv 0 \smod{p}$. 
So far, only the Wilson primes $5$, $13$, and $563$ have been found.
Any further Wilson prime must be greater than $2 \times 10^{13}$; 
see Costa et al.~\cite{CGH:2014}, also for the detailed history of 
the algorithms and search for these primes.

An odd prime $p$ is called irregular, if $p$ divides the numerator of any Bernoulli 
numbers $B_\ell$, equivalently to $B_\ell/\ell$, for even $\ell \in \set{2,4,\ldots,p-3}$; 
otherwise, $p$ is called regular. This classification of primes was introduced by 
Kummer~\cite{Kummer:1850} in 1850. More precisely, he proved that Fermat's last theorem 
is true in case the exponent is a regular prime. The only irregular primes below $100$ 
are $37$, $59$, and $67$. There exist infinitely many irregular primes,
see Carlitz~\cite{Carlitz:1954} for a short proof.

\begin{corl}
Let $n \in 2\NN$ and $p \geq 5$ be a prime. If $\norm{\BNP_n/n}_p < 1 $, then
$p$ is a Wilson prime, if $p-1 \mid n$; otherwise, $p$ is an irregular prime.
\end{corl}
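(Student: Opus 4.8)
The plan is to read the result directly off Proposition~\ref{prop:bnp-int}(2), which already expresses $-\BNP_n/n$ modulo $p$ in exactly the two regimes matching the two conclusions. First I would translate the hypothesis: since $\ord_p \BNP_n \geq \ord_p n$ by Proposition~\ref{prop:bnp-int}(1), the quotient $\BNP_n/n$ is $p$-integral, so $\norm{\BNP_n/n}_p \leq 1$ always, and the ultrametric bound $\norm{\BNP_n/n}_p < 1$ is equivalent to $\ord_p(\BNP_n/n) \geq 1$, that is, $\BNP_n/n \equiv 0 \smod{p}$.

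Next I would fix the residue $n' \equiv n \smod{p-1}$ with $0 \leq n' \leq p-3$ as in the proposition, and note that since $n \in 2\NN$ while $p-1$ is even, $n'$ is even as well. Then I would split into the two cases according to whether $p-1 \mid n$. In the case $p-1 \mid n$ we have $n' = 0$, and Proposition~\ref{prop:bnp-int}(2) gives $-\BNP_n/n \equiv \WQ_p \smod{p}$; combined with the reformulated hypothesis this forces $\WQ_p \equiv 0 \smod{p}$, which is precisely the definition of a Wilson prime.

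In the remaining case $p-1 \nmid n$ we have $n' \neq 0$, and because $n'$ is even with $0 \leq n' \leq p-3$ it lies in $\set{2,4,\ldots,p-3}$. Here Proposition~\ref{prop:bnp-int}(2) gives $-\BNP_n/n \equiv \zeta(1-n') \smod{p}$, and inserting $\zeta(1-n') = -\BN_{n'}/n'$ the hypothesis yields $\BN_{n'}/n' \equiv 0 \smod{p}$. Thus $p$ divides the numerator of $\BN_{n'}/n'$ for an even $n' \in \set{2,4,\ldots,p-3}$, which is exactly the definition of an irregular prime. I do not anticipate any genuine obstacle, as the entire analytic content is packaged in Proposition~\ref{prop:bnp-int}; the only point requiring care is verifying that $n'$ is even and lands in the range $\set{2,4,\ldots,p-3}$, so that the standard definition of irregularity applies verbatim.
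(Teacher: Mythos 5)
Your proposal is correct and matches the paper's intent exactly: the corollary is stated without proof precisely because it is the immediate consequence of Proposition~\ref{prop:bnp-int} that you spell out, with part~(1) giving $p$-integrality so that $\norm{\BNP_n/n}_p<1$ means $\BNP_n/n\equiv 0\smod{p}$, and part~(2) identifying $-\BNP_n/n$ modulo $p$ with $\WQ_p$ or with $\zeta(1-n')=-\BN_{n'}/n'$ according to whether $p-1\mid n$. Your checks that $n'$ is even and lies in $\set{2,4,\ldots,p-3}$ are the only details worth recording, and you handle them correctly.
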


The linear forward difference operator $\opdf{h}{}$ with step $h$
and its powers are defined by
\[
  \opdf{h}{n} f(s) = \sum_{\nu=0}^n \binom{n}{\nu} (-1)^{n-\nu} f(s+\nu h)
\]
for integers $n \geq 0$, $h \geq 1$, and any function $f: \ZZ_p \to \ZZ_p$. 
We use the expression, for example, $\opdf{h}{n} f(s+t) \valueat{s = 1}$
to indicate the variable and an initial value when needed.

The generalized Kummer congruences can be described, as follows.

\begin{prop} \label{prop:gen-congr}
Let $n \in 2\NN$, $p \geq 5$ be a prime, and $r \geq 1$. Then
\[
  \opdf{p-1}{r} \frac{\BNP_\nu}{\nu} \valueat{\nu = n} \equiv 0 \smod{p^r}
\]
holds for the following two mutually exclusive conditions:
\begin{enumerate}
\item $p-1 \nmid n$ and $n > r$;
\item $p-1 \mid n$ and $p > r + n/(p-1)$.
\end{enumerate}
\end{prop}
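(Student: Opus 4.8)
The plan is to remove the awkward dependence of $\BNP_\nu/\nu$ on the index $\nu$ by passing to a $p$-adic interpolation in which $\nu$ enters only through a bare power $x^{\nu-1}$ of a $p$-adic unit. The engine is the elementary identity
\[
  \opdf{p-1}{r} x^{\nu} \valueat{\nu=n} = x^{n}\,(x^{p-1}-1)^{r},
\]
obtained by iterating $\opdf{p-1}{1}x^{\nu}=x^{\nu}(x^{p-1}-1)$; for a unit $x$ one has $x^{p-1}-1 = p\,q_p(x)\in p\ZZ_p$, so each application of $\opdf{p-1}{1}$ manufactures one factor of $p$ out of a Fermat quotient and the right-hand side lies in $p^{r}\ZZ_p$. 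I would realize the interpolation through the $c$-regularized Bernoulli (Mazur) measure $\mu_c$ on $\ZZ_p^\times$ attached to a fixed unit $c$, which is $\ZZ_p$-valued and satisfies $\int_{\ZZ_p^\times}x^{\nu-1}\,d\mu_c = (1-c^{\nu})(1-p^{\nu-1})\,\BN_\nu/\nu$; equivalently one may use the regularized power sums $p^{-m}\sum_{p\nmid a}a^{\nu}$, where the factor $(x^{p-1}-1)^r$ reappears as $(p\,q_p(a))^r$, keeping the argument inside the Fermat-quotient framework of the paper. Applying $\opdf{p-1}{r}$ under the integral then yields at once
\[
  \opdf{p-1}{r}\left[(1-c^{\nu})(1-p^{\nu-1})\tfrac{\BN_\nu}{\nu}\right]\valueat{\nu=n}
  = \int_{\ZZ_p^\times}x^{n-1}(x^{p-1}-1)^{r}\,d\mu_c \;\in\; p^{r}\ZZ_p .
\]

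The remaining task is to peel off the two Euler-type factors and to replace $\BN_\nu$ by $\BNP_\nu$. The factor $(1-p^{\nu-1})$ is harmless on the $p$-integral part: under either hypothesis every index $\nu=n+j(p-1)$ satisfies $\nu-1\ge r$, so $p^{\nu-1}\,\BNP_\nu/\nu\in p^{r}\ZZ_p$ term by term and may be discarded modulo $p^{r}$. To remove $(1-c^{\nu})$ I would induct on $r$, with the base case supplied by the unrestricted Kummer congruences established above (and by the $p$-integrality $\ord_p\BNP_\nu\ge\ord_p\nu$ of Proposition~\ref{prop:bnp-int}), expanding by the discrete Leibniz rule
\[
  \opdf{p-1}{r}(fg)\valueat{n}=\sum_{i=0}^{r}\binom{r}{i}\,\opdf{p-1}{i}f\valueat{n}\;\opdf{p-1}{r-i}g\valueat{n+i(p-1)} .
\]
Taking $f=1-c^{\nu}$, one has $\opdf{p-1}{i}f\valueat{n}=-c^{n}(c^{p-1}-1)^{i}\in p^{i}\ZZ_p$ for $i\ge1$, while the companion factor lies in $p^{r-i}\ZZ_p$ by the inductive hypothesis (whose conditions transport verbatim to the shifted base point $n+i(p-1)$, the controlling quantity $r+n/(p-1)$ being invariant); hence all terms with $i\ge1$ fall into $p^{r}\ZZ_p$ and only the $i=0$ term $(1-c^{n})\,\opdf{p-1}{r}(\BNP_\nu/\nu)\valueat{n}$ survives modulo $p^{r}$.

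In Case~(1), where $p-1\nmid n$, this closes immediately: choosing $c$ to be a primitive root modulo $p$ gives $c^{n}\not\equiv1\smod{p}$, so $1-c^{n}\in\ZZ_p^{\times}$, and since $\BNP_\nu=\BN_\nu$ throughout this residue class, dividing out the unit recovers $\opdf{p-1}{r}(\BNP_\nu/\nu)\valueat{n}\in p^{r}\ZZ_p$. Case~(2), where $p-1\mid n$, is the main obstacle. Writing $n_0=n/(p-1)$, the claim here is the clean statement $\opdf{1}{r}\BNN_m\valueat{m=n_0}\equiv0\smod{p^{r}}$ about the divided numbers of \eqref{eq:bnn-1}, since $\BNP_{\nu_j}/\nu_j=\BNN_{n_0+j}$ for $\nu_j=(n_0+j)(p-1)$. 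Two linked difficulties now appear: $c^{n}\equiv1\smod{p}$, so $1-c^{n}$ has valuation $\ge1$ and naively dividing it out would cost the last power of $p$; and $\BN_\nu/\nu$ genuinely has a simple pole for $p-1\mid\nu$, cancelled only by the zero of $1-c^{\nu}$, so one may not isolate $\BN_\nu/\nu$.

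The way through is to keep $\BNP_\nu$ intact and split $\BN_\nu/\nu=\BNP_\nu/\nu+\tfrac{p-1}{p\nu}$, separating off the \emph{explicit} correction $(1-c^{\nu})(1-p^{\nu-1})\tfrac{p-1}{p\nu}$, whose $r$-th forward difference is computable in closed form from
\[
  \opdf{p-1}{r}\tfrac{1}{\nu}\valueat{n}=\frac{(-1)^{r}(p-1)^{r}\,r!}{\prod_{j=0}^{r}\bigl(n+j(p-1)\bigr)}
  =\frac{(-1)^{r}\,r!}{(p-1)\prod_{j=0}^{r}(n_0+j)} .
\]
Here the hypothesis $p>r+n/(p-1)$ is exactly what makes the consecutive denominators $n_0,n_0+1,\dots,n_0+r$ all $p$-adic units, so this difference has precisely controlled valuation and, with $r<p$, no spurious factor of $p$ hides in $r!$. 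The crux of the whole proposition is then to verify that the order-one pole carried by this explicit correction cancels against the zero of $1-c^{n}$, so that after dividing, $\opdf{p-1}{r}(\BNP_\nu/\nu)\valueat{n}$ regains the full factor $p^{r}$; this final valuation accounting is driven entirely by Condition~(2), and I expect it to be the most delicate part of the argument.
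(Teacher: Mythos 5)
First, note that the paper does not actually prove this proposition: its ``proof'' consists of two citations (Kummer for case~(1), Carlitz for case~(2)), so you are attempting something the paper outsources. Your treatment of case~(1) is essentially a complete and correct self-contained argument: the identity $\opdf{p-1}{r} x^\nu \valueat{\nu=n} = x^n(x^{p-1}-1)^r$, the $\ZZ_p$-valued regularized measure, the discrete Leibniz rule with $\opdf{p-1}{i}(1-c^\nu)\valueat{\nu=n} = -c^n(c^{p-1}-1)^i \in p^i\ZZ_p$, and the induction on $r$ (with $n>r$ propagating to the shifted base points) all check out, and since $1-c^n$ is a unit for a primitive root $c$ when $p-1\nmid n$, the conclusion follows. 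That is a legitimate alternative to simply citing Kummer.

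Case~(2), however, is not proved, and this is the substantive half of the proposition (it is exactly Carlitz's theorem, and it is the case the paper actually uses in Remark~1.5 and Lemma~5.2). You correctly identify the obstruction but then stop at ``I expect it to be the most delicate part of the argument.'' The difficulty is worse than a routine verification left to the reader: as you have set things up, the measure gives $(1-c^n)\,\opdf{p-1}{r}\frac{\BNP_\nu}{\nu}\valueat{\nu=n} \in p^r\ZZ_p$ up to the cross terms and the explicit correction, and since $\ord_p(1-c^n)=1$ when $p-1\mid n$ (and $n_0=n/(p-1)<p$), dividing out $1-c^n$ only yields $\opdf{p-1}{r}\frac{\BNP_\nu}{\nu}\valueat{\nu=n} \in p^{r-1}\ZZ_p$ --- one power short. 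To recover the missing factor you would have to sharpen \emph{every} ingredient to modulus $p^{r+1}$: the integral bound, the correction term $\opdf{p-1}{r}\bigl[(1-c^\nu)\tfrac{p-1}{p\nu}\bigr]\valueat{\nu=n}$ (which by your own closed form has valuation exactly $r$ in general, not $r+1$), and the Leibniz cross terms $\binom{r}{i}\opdf{p-1}{i}(1-c^\nu)\valueat{\nu=n}\cdot\opdf{p-1}{r-i}\frac{\BNP_\nu}{\nu}\valueat{\nu=n+i(p-1)}$, which would then require $\opdf{p-1}{r-i}\frac{\BNP_\nu}{\nu} \in p^{r-i+1}\ZZ_p$ --- a statement \emph{stronger} than the proposition you are inducting on. So the induction does not close, and a genuinely new cancellation (this is precisely Carlitz's computation) is needed. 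As it stands, case~(2) is a gap; either carry out that cancellation in detail or, like the author, cite Carlitz \cite{Carlitz:1953} for it.
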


\begin{proof}
(1) See Kummer~\cite[p.\,371]{Kummer:1851}. 
(2) See Carlitz~\cite[Theorem\,1, p.\,166]{Carlitz:1953}.
\end{proof}

Define the power sums as usual by
\[
  S_n(m) = \sum_{\nu=1}^{m-1} \nu^n \quad (m, n \geq 0).
\]
Note that the case ${n=0}$ would normally include the summand for ${\nu = 0}$, 
but for practical reasons, we need here 
\[
  S_0(m) = m-1.
\]
It is then well known for $n \geq 1$ that
\begin{equation} \label{eq:sn-bn}
  S_n(x) = \int_{0}^{x} \BN_n(t) \, dt
  = \frac{1}{n+1}( \BN_{n+1}(x) - \BN_{n+1} ), 
\end{equation}
where $S_n(x)$ is an integer-valued polynomial of degree $n+1$ without constant term.

Define the linear $p$-adic backward shift operator by
\[
  \opsh_p: p \ZZ_p \to \ZZ_p, \quad
  a = \sum_{\nu \geq 1} a_\nu p^\nu \mapsto \frac{a}{p} = \sum_{\nu \geq 0} a_{\nu+1} p^\nu.
\]
This operator does not truncate a $p$-adic expansion, since it is defined on $p \ZZ_p$.
Finally, the power sums of the Fermat quotients can be expressed, as follows.

\begin{lemma} \label{lem:qp-diff}
For $n \geq 1$ and any prime $p$, we have 
\[
  Q_p(n) = \opsh_p^n \, \opdf{p-1}{n} \, S_\nu(p) \valueat{\nu = 0}.
\]
\end{lemma}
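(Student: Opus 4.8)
The plan is to unfold the right-hand side from the inside out and collapse it via the binomial theorem. First I would write out the forward difference operator, applied to the function $\nu \mapsto S_\nu(p)$ with step $p-1$ and evaluated at the initial value $\nu = 0$, according to its definition:
\[
  \opdf{p-1}{n} S_\nu(p) \valueat{\nu=0}
  = \sum_{k=0}^{n} \binom{n}{k} (-1)^{n-k} S_{k(p-1)}(p).
\]
Here the nonstandard convention $S_0(p) = p-1$ is exactly what is needed for the boundary term $k=0$: since $a^0 = 1$, one has $S_0(p) = \sum_{a=1}^{p-1} a^0$, so that $S_{k(p-1)}(p) = \sum_{a=1}^{p-1} (a^{p-1})^k$ holds uniformly for all $k \geq 0$.

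Next I would interchange the two finite summations and recognize the inner sum over $k$ as a binomial expansion:
\[
  \sum_{k=0}^{n} \binom{n}{k} (-1)^{n-k} S_{k(p-1)}(p)
  = \sum_{a=1}^{p-1} \sum_{k=0}^{n} \binom{n}{k} (-1)^{n-k} (a^{p-1})^k
  = \sum_{a=1}^{p-1} (a^{p-1} - 1)^n.
\]
By the definition $q_p(a) = (a^{p-1}-1)/p$ in \eqref{eq:qp-def}, each summand equals $p^n q_p(a)^n$, so the whole expression becomes $p^n \sum_{a=1}^{p-1} q_p(a)^n = p^n Q_p(n)$.

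Finally I would apply the backward shift $\opsh_p^n$, which strips off the factor $p^n$, to recover $Q_p(n)$ and conclude. I do not expect a genuine obstacle here; the two points worth a line of care are the boundary case $k=0$ that the convention $S_0(p)=p-1$ is designed to absorb, and the legitimacy of the iterated shift. For the latter, since each $q_p(a)$ is an ordinary integer by Fermat's little theorem, $Q_p(n) \in \ZZ$, and hence the intermediate values $p^{n-j} Q_p(n)$ lie in $p\ZZ_p$ for $j = 0, \ldots, n-1$ --- precisely the domain on which $\opsh_p$ is defined --- so the composition $\opsh_p^n$ is well defined and acts as division by $p^n$.
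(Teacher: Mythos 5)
Your proof is correct and is essentially the paper's argument read in the opposite direction: the paper writes $q_p(a)^n=\opsh_p^n\,\opdf{p-1}{n}\,a^\nu\valueat{\nu=0}$ for each $a$ and sums over $a=1,\ldots,p-1$, while you expand the right-hand side, interchange the finite sums, and collapse via the same binomial identity $(a^{p-1}-1)^n=\sum_k\binom{n}{k}(-1)^{n-k}a^{k(p-1)}$. Your explicit checks of the $k=0$ boundary term (absorbed by the convention $S_0(p)=p-1$) and of the domain of the iterated shift $\opsh_p^n$ are points the paper leaves implicit, but there is no substantive difference in method.
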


\begin{proof}
By \eqref{eq:qp-def}, summing 
\[
  q_p(a)^n = (\opsh_p (a^{p-1} - 1))^n
  = \opsh_p^n \, \opdf{p-1}{n} \, a^\nu \valueat{\nu = 0}
\]
over $a = 1,\ldots,p-1$ gives the result.
\end{proof}


\section{\texorpdfstring{$p$-adic}{p-adic} valuation of the Bernoulli polynomials}

In view of \eqref{eq:sn-bn}, define for $n \geq 1$ the related polynomials
\[
  \BNT_n(x) = \BN_n(x) - \BN_n,
\]
having no constant term. These polynomials have some interesting properties, as follows.
Define $s_p(n)$ as the sum of the base-$p$ digits of $n$.
Further define the product
\[
  \DD_n = \prod_{s_p(n) \, \geq \, p} p,
\]
which runs over the primes, see Kellner~\cite{Kellner:2017}. 
Since $s_p(n) = n$ for $p > n$, the product is always finite.
We have the remarkable relationship with the denominator of $\BNT_n(x)$ such that
\[
  \denom( \BNT_n(x) ) = \DD_n,
\]
which follows from the $p$-adic product formula. In particular, we have
\[
  \ord_p \BNT_n(x) =
  \begin{cases}
    -1, & \text{if $s_p(n) \geq p$}; \\
    \phantom{-} 0, & \text{otherwise}.
  \end{cases}
\]
As a further consequence, we have
\[
  \denom( S_n(x) ) = (n+1) \, \DD_{n+1}.
\]

Evaluating $S_n(x) = \BNT_{n+1}(x)/(n+1)$ with the help of \eqref{eq:bnp-def} 
provides the well-known formulas
\begin{align}
  S_n(x) &= \frac{1}{n+1} \sum_{k=1}^{n+1} \binom{n+1}{k} \BN_{n+1-k} \, x^k \label{eq:sn-bn-1} \\
  &= \sum_{\nu=0}^{n} \binom{n}{\nu} \BN_{n-\nu} \frac{x^{\nu+1}}{\nu+1}. \label{eq:sn-bn-2}
\end{align}
 
A \emph{folklore} result is the congruence for $n \in 2\NN$ and any prime $p$ that
\[
  S_n(p) \equiv p B_n \smod{p^r} \textq{where}
  \begin{cases}
    r = 2, & \text{if $p-1 \nmid n$}; \\
    r = 1, & \text{otherwise}.
  \end{cases}
\]
(Cf.~Ireland and Rosen \cite[Chap.\,15, pp.\,235--237]{IR:1990}.)
This can be achieved by a careful $p$-adic valuation of each term of the sum 
of the right-hand side of \eqref{eq:sn-bn-2}. Without knowledge of the factors of
$n+1$, \eqref{eq:sn-bn-1} is less suitable. However, in our case we have to consider
numbers of the form $n = d (p-1)$ with $d \geq 1$, and we can proceed, as follows.

Define the modified power sums by
\begin{equation} \label{eq:sh-def}
  \SH_n(x) = \frac{S_n(x) - S_0(x)}{x} \textq{where} \SH_0(p) = 0.
\end{equation}
To shorten the notation, let 
\begin{equation} \label{eq:bnpd-def}
  \BNPD_n = \frac{\BNP_n}{n} \quad (n \geq 1) \andq
  \BNPD_n = 0 \quad (n \leq 0),
\end{equation}
where the latter case is compatible with ${\SH_0(p) = 0}$.
We need a simple standard lemma (cf.~Robert~\cite{Robert:2000}).

\begin{lemma} \label{lem:log-ord}
Let $n \geq 1$ and $p$ be a prime. Then
\[
  \log_p n \geq \ord_p n,
\]
where $\log_p$ is the $\log$ function to base $p$.
\end{lemma}

\begin{lemma} \label{lem:binom-mod}
Let $d \geq 1$ and $p$ be a prime. Then
\[
  \ord_p \binom{d(p-1)}{p-1} =
  \begin{cases}
    0, & \text{if $d \equiv 1 \smod{p}$}; \\
    \delta \geq 1, & \text{otherwise}.
  \end{cases}
\]
\end{lemma}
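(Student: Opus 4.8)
The plan is to invoke Lucas' theorem, which reduces the whole computation to a single base-$p$ digit. Write $N = d(p-1)$ and let $N_0$ denote its units digit in base $p$. Since the lower entry $p-1$ is itself a single base-$p$ digit (its units digit is $p-1$ and all higher digits vanish), Lucas' theorem collapses to
\[
  \binom{d(p-1)}{p-1} \equiv \binom{N_0}{p-1} \smod{p},
\]
because every higher digit of $N$ contributes a factor $\binom{N_i}{0}=1$. So the only thing that matters is whether $N_0$ equals $p-1$: the factor $\binom{N_0}{p-1}$ equals $\binom{p-1}{p-1}=1$ when $N_0=p-1$, and is $0$ otherwise, since $0\le N_0\le p-1$ forces the binomial to vanish as soon as $N_0<p-1$.

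Next I would translate the condition $N_0=p-1$ into a condition on $d$. Since $N=d(p-1)\equiv -d \smod{p}$, the units digit $N_0$ equals $p-1$ exactly when $-d\equiv -1 \smod{p}$, i.e.\ when $d\equiv 1 \smod{p}$. Combining the two steps gives the stated dichotomy: $\ord_p\binom{d(p-1)}{p-1}=0$ precisely when $d\equiv 1\smod{p}$, while in every remaining residue class the binomial is divisible by $p$, so that its valuation $\delta$ is at least $1$.

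If one wanted the exact value of $\delta$ in the second case instead of merely $\delta\ge 1$, I would appeal to Kummer's theorem, which identifies $\ord_p\binom{d(p-1)}{p-1}$ with the number of carries occurring when adding $(d-1)(p-1)$ and $p-1$ in base $p$; the first step above is then just the statement that there is no units-digit carry precisely when $d\equiv1\smod{p}$. For the present lemma this refinement is unnecessary, so there is no real obstacle: the argument is a one-digit application of Lucas' theorem together with the elementary congruence $d(p-1)\equiv -d\smod{p}$. The only point needing a line of care is the boundary bookkeeping that $\binom{N_0}{p-1}\ne 0$ forces $N_0=p-1$ exactly, which is immediate from $N_0\le p-1$.
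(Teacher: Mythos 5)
Your argument is correct, but it takes a different route from the paper. The paper's proof is a one-line appeal to Wilson's theorem: since $(p-1)!\equiv -1 \smod{p}$, one has $\binom{d(p-1)}{p-1}\equiv -(dp-d)(dp-d-1)\cdots(dp-d-p+2) \smod{p}$, and this product of $p-1$ consecutive integers runs over all residues mod $p$ except $-d+1$, so it avoids the factor $p$ exactly when $d\equiv 1\smod{p}$. You instead invoke Lucas' theorem to collapse everything onto the units digit $N_0$ of $N=d(p-1)$, getting $\binom{N}{p-1}\equiv\binom{N_0}{p-1}\smod{p}$, which is $1$ when $N_0=p-1$ (i.e.\ $d\equiv 1\smod{p}$, since $N\equiv -d$) and $0$ otherwise. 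Both are complete and essentially equally short; the paper's version is more self-contained (only Wilson's theorem and a counting of residues among $p-1$ consecutive integers, which also fits the paper's theme), while yours imports a heavier classical tool but in return makes the dichotomy structurally transparent and, as you note, points directly at Kummer's carry-counting theorem if one ever wanted the exact value of $\delta$ rather than just $\delta\geq 1$. One small bookkeeping point in your favor: your argument visibly covers $p=2$ (where $\binom{d}{1}=d$), which the falling-factorial phrasing also handles but less obviously.
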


\begin{proof}
We have $\binom{d(p-1)}{p-1} \equiv -(dp-d)_{p-1} \smod{p}$, where the falling 
factorial does not contain the factor $p$ if and only if $d \equiv 1 \smod{p}$
by a counting argument.
\end{proof}

\begin{prop} \label{prop:sh-eval}
Let $p \geq 5$ be a prime. Let $d \geq 1$ and $n = d(p-1)$. Then we have
\begin{equation} \label{eq:sh-eval}
  \SH_n(p) = \BNP_n + \sum_{\substack{\nu=2\\ 2 \mids \nu}}^{p-3}
  \binom{n}{\nu+1} \BNPD_{n-\nu} \, p^\nu +
  \begin{cases}
    \tfrac{1}{2} p^{p-2}, & \text{if $d = 1$}; \\
    O(p^{p-2}), & \text{if $d \not\equiv 1 \smod{p}$}; \\
    O(p^{p-3}), & \text{otherwise}.
  \end{cases}
\end{equation}
For the above coefficients, we have that $\BNP_n, \BNPD_{n-\nu} \in \ZZ_p$, 
and therefore $\SH_n(p) \in \ZZ_p$.
\end{prop}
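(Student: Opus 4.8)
The plan is to expand $\SH_n(p)$ through the explicit power-sum formula \eqref{eq:sn-bn-2}, evaluate at $x=p$, and then sort the resulting terms by their $p$-adic size. Writing $n=d(p-1)$ and using \eqref{eq:sh-def}, peeling off the $\nu=0$ term of \eqref{eq:sn-bn-2} yields
\[
  \SH_n(p) = \frac{S_n(p)-(p-1)}{p}
  = \Big( \BN_n - 1 + \tfrac1p \Big) + \sum_{\nu=1}^{n} \binom{n}{\nu}\BN_{n-\nu}\frac{p^\nu}{\nu+1}.
\]
Because $p-1\mid n$, the von~Staudt--Clausen relation \eqref{eq:bn-frac} identifies the bracket as $\BNP_n\in\ZZ_p$, which is the leading term; everything else sits in the sum, which I now reorganize.

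First I would simplify each summand. For $\nu<n$ the identity $\binom{n}{\nu}\frac1{\nu+1}=\frac1{n-\nu}\binom{n}{\nu+1}$ turns the term into $\binom{n}{\nu+1}\frac{\BN_{n-\nu}}{n-\nu}p^\nu$, whereas the endpoint $\nu=n$ (where $\binom{n}{n+1}=0$) must be retained separately as $\frac{p^n}{n+1}$. Since $n$ is even and $\BN_m=0$ for odd $m\geq3$, all odd $\nu\leq n-3$ vanish, and the sole surviving odd index is $\nu=n-1$, contributing $\binom{n}{n-1}\BN_1\frac{p^{n-1}}{n}=-\tfrac12 p^{n-1}$. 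For even $\nu$ one has $p-1\mid n-\nu$ exactly when $p-1\mid\nu$; if $p-1\nmid\nu$ then $\frac{\BN_{n-\nu}}{n-\nu}=\BNPD_{n-\nu}$, which is $p$-integral by Proposition~\ref{prop:bnp-int}(1), while if $p-1\mid\nu$ it exceeds $\BNPD_{n-\nu}$ by the correction $\frac{p-1}{p(n-\nu)}$. As the smallest positive multiple of $p-1$ already exceeds $p-3$, no even $\nu$ with $2\leq\nu\leq p-3$ is divisible by $p-1$; hence the even terms with $\nu\leq p-3$ reproduce precisely the main sum $\sum\binom{n}{\nu+1}\BNPD_{n-\nu}p^\nu$ of \eqref{eq:sh-eval}, and all remaining terms must be shown to fall into the stated error.

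The core of the argument is the $p$-adic estimation of these leftovers: the top term $\frac{p^n}{n+1}$, the odd term $-\tfrac12 p^{n-1}$, the integral pieces $\binom{n}{\nu+1}\BNPD_{n-\nu}p^\nu$ with $\nu\geq p-1$, and the corrections $C_\nu=\binom{n}{\nu}\frac{p-1}{\nu+1}p^{\nu-1}$ for even $\nu\geq p-1$ with $p-1\mid\nu$. For $d=1$ the range $p-1\leq\nu\leq n-2$ is empty and $\frac{p^n}{n+1}=p^{p-2}$ combines with $-\tfrac12 p^{p-2}$ to give exactly $\tfrac12 p^{p-2}$, settling the first case. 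For $d\geq2$ one has $n-1\geq 2p-3$, so both top terms are $O(p^{2p-3})$, and each integral piece is $O(p^{p-1})$; the only contribution that can descend to order $p-3$ is the correction at the least admissible index $\nu=p-1$, namely $C_{p-1}=\binom{n}{p-1}(p-1)p^{p-3}$, whose order equals $p-3+\ord_p\binom{d(p-1)}{p-1}$. Lemma~\ref{lem:binom-mod} pins this down: the order is $p-3$ precisely when $d\equiv1\smod p$ and at least $p-2$ otherwise, which is exactly the claimed dichotomy. The remaining corrections with $\nu\geq2(p-1)$ satisfy $\ord_p C_\nu\geq(\nu-1)-\ord_p(\nu+1)\geq(\nu-1)-\log_p(\nu+1)$ by Lemma~\ref{lem:log-ord}; this bound is increasing in $\nu$ and is already $\geq p-2$ at $\nu=2(p-1)$ for $p\geq5$, so all of them are $O(p^{p-2})$.

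I expect the delicate part to be exactly this tail analysis --- keeping the binomial coefficients and the Bernoulli denominators aligned while verifying that $\nu=p-1$ is the unique index whose term can drop to order $p-3$. Once the order of $C_{p-1}$ is fixed by Lemma~\ref{lem:binom-mod}, the three cases emerge directly, and the final $p$-integrality assertion follows since every summand --- the leading $\BNP_n$, the main sum, and the error (of order $\geq p-3\geq2$) --- lies in $\ZZ_p$.
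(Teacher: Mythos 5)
Your proof is correct and follows essentially the same route as the paper: expand $S_n(p)/p$ via \eqref{eq:sn-bn-2}, identify the main terms for even $\nu\leq p-3$, and control the tail using von~Staudt--Clausen together with Lemmas~\ref{lem:log-ord} and~\ref{lem:binom-mod}, with $\nu=p-1$ emerging as the unique critical index governing the $O(p^{p-3})$ versus $O(p^{p-2})$ dichotomy. The only difference is bookkeeping --- you split each term with $p-1\mid\nu$ into a $p$-integral piece plus an explicit correction $C_\nu$, whereas the paper keeps $\BN_{n-\nu}$ whole in its remainder $R_2$ and bounds its valuation directly --- but the estimates are identical.
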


\begin{proof}
Let $p \geq 5$ and $d=1$, so $n = p-1 \geq 4$. Note that $S_0(p)/p = 1 - \frac{1}{p}$. 
From \eqref{eq:sn-bn-2} and \eqref{eq:sh-def}, we infer that
\begin{align}
  \SH_n(p) &= \BNP_n + \sum_{\substack{\nu=2\\ 2 \mids \nu}}^{p-3}
  \binom{n}{\nu} \BN_{n-\nu} \frac{p^\nu}{\nu+1} + R(p) \label{eq:sh-rem} \\
  &= \BNP_n + \sum_{\substack{\nu=2\\ 2 \mids \nu}}^{p-3}
  \binom{n}{\nu+1} \BNPD_{n-\nu} \, p^\nu + R(p), \label{eq:sh-rem-2}
\end{align}
where the remainder term is given by
\[
  R(p) = \BN_1 \, p^{p-2} + \BN_0 \, p^{p-2} = \tfrac{1}{2} p^{p-2}.
\]
By definition, $\BNP_n$ is $p$-integral. This property also holds for the above 
coefficients $\BNPD_{n-\nu}$ by Proposition~\ref{prop:bnp-int}, since 
$n-\nu \equiv -\nu \not\equiv 0 \smod{p-1}$. Now, let $d \geq 2$ and $n = d(p-1)$. 
Then extending \eqref{eq:sh-rem} and \eqref{eq:sh-rem-2} yields
\[
  R(p) = R_1(p) + R_2(p) + R_3(p),
\]
where we split the summation and show that
\begin{align*}
  R_1(p) &= \sum_{\substack{\nu=p+1\\p-1 \nmids \nu\\ 2 \mids \nu}}^{n-2}
  \binom{n}{\nu+1} \BNPD_{n-\nu} \, p^\nu \in p^{p+1} \ZZ_p, \\
  R_2(p) &= \!\! \sum_{\substack{\ell=1\\ \nu=\ell(p-1)}}^{d-1} \hspace*{-1.5ex}
  \binom{n}{\nu} \BN_{n-\nu} \frac{p^\nu}{\nu+1} \in
  \begin{cases}
    p^{p-2} \ZZ_p, & \text{if $d \not\equiv 1 \smod{p}$}; \\
    p^{p-3} \ZZ_p, & \text{otherwise},
  \end{cases} \\
\shortintertext{and}
  R_3(p) &= - \frac{p^{n-1}}{2} + \frac{p^{n}}{n+1} \in p^{p-2} \ZZ_p.
\end{align*}

Case $R_1(p)$: The coefficients $\BNPD_{n-\nu}$ are $p$-integral, 
so the first term with ${\nu=p+1}$ $p$-adically wins,
giving $R_1(p) \in p^{p+1} \ZZ_p$.

Case $R_2(p)$: For ${1 \leq \ell < d}$ and ${\nu=\ell(p-1)}$, 
we have that ${\ord_p \BN_{n-\nu} = -1}$ by the von Staudt--Clausen theorem. 
Lemma~\ref{lem:log-ord} provides that
\begin{equation} \label{r2-estim-1}
  1 + \log_p \ell \geq 1 + \log_p \mleft( \ell \mleft( 1 - \tfrac{1}{p} \mright) + \tfrac{1}{p} \mright)
  = \log_p(\nu+1) \geq \ord_p(\nu+1).
\end{equation}
Define
\[
  L(\ell) = (p-1)(\ell-1) - \log_p \ell \andq \delta = \ord_p \binom{n}{\nu},
\]
where $L(1) = 0$ and $L(\ell) \geq 1$ for all $\ell \geq 2$. We then obtain
\begin{equation} \label{r2-estim-2}
\begin{aligned}
  \ord_p \mleft( \binom{n}{\nu} \BN_{n-\nu} \frac{p^\nu}{\nu+1} \mright)
  &= \delta - 1 + \nu - \ord_p(\nu+1) \\
  &\geq \delta + \ell(p-1) - 2 - \log_p \ell \\
  &= \delta + p-3 + L(\ell) \\
  &\geq \begin{cases}
    p-3, & \text{if $\ell = 1$ and $d \equiv 1 \smod{p}$}; \\
    p-2, & \text{otherwise}.
  \end{cases}
\end{aligned}
\end{equation}
The inequalities of \eqref{r2-estim-2} are sharp for $\ell = 1$ and $d \equiv 1 \smod{p}$ 
due to Lemma~\ref{lem:binom-mod}. This implies that $R_2(p) \in p^{p-2} \ZZ_p$ if $d \not\equiv 1 \smod{p}$,
and $R_2(p) \in p^{p-3} \ZZ_p$ otherwise.

Case $R_3(p)$: Similar to case $R_2(p)$, the inequalities of \eqref{r2-estim-1} 
and \eqref{r2-estim-2} with $\nu = d(p-1)$ imply that
\[
  \ord_p \frac{p^n}{n+1} \geq p-2.
\]
Since $\ord_p( p^{n-1}/2 ) = n-1 = d(p-1) - 1 > p-2$, this shows that $R_3(p) \in p^{p-2} \ZZ_p$.

Finally, putting all together, we derive in case $d \geq 2$ that 
$R(p) = O(p^{p-2})$ if $d \not\equiv 1 \smod{p}$, and $R(p) = O(p^{p-3})$ otherwise;
showing \eqref{eq:sh-eval}.
As a consequence, we conclude that $\SH_n(p) \in \ZZ_p$ for $d \geq 1$, completing the proof.
\end{proof}

\begin{corl} \label{cor:sh-cases}
Let $p \geq 5$ be a prime. Let $d \geq 1$ and $n = d(p-1)$. 
Define
\[
  \delta = \begin{cases}
    0, & \text{if $d \geq 2$ and $d \equiv 1 \smod{p}$}; \\
    1, & \text{otherwise}.
  \end{cases}
\]
In particular, if $d \leq p$, then $\delta = 1$. For $r \geq 1$ and $p \geq \max(5, r+3-\delta)$, we have
\begin{equation} \label{eq:sh-mod}
  \SH_n(p) \equiv \BNP_n + \sum_{\substack{\nu=2\\ 2 \mids \nu}}^{r-1}
  \binom{n}{\nu+1} \BNPD_{n-\nu} \, p^\nu \smod{p^r}.
\end{equation}
We obtain in case $d \leq p$ that
\[
  \SH_n(p) \equiv
  \begin{cases}
    \BNP_n \smod{p}, & \text{if $p \geq 5$}; \\
    \BNP_n \smod{p^2}, & \text{if $p \geq 5$}; \\
    \BNP_n + p^2 \binom{n}{3} \BNPD_{n-2} \smod{p^3}, & \text{if $p \geq 5$}; \\
    \BNP_n + p^2 \binom{n}{3} \BNPD_{n-2} \smod{p^4}, & \text{if $p \geq 7$}; \\
    \BNP_n + p^2 \binom{n}{3} \BNPD_{n-2} + p^4 \binom{n}{5} \BNPD_{n-4} \smod{p^5}, & \text{if $p \geq 7$}; \\
    \BNP_n + p^2 \binom{n}{3} \BNPD_{n-2} + p^4 \binom{n}{5} \BNPD_{n-4} \smod{p^6}, & \text{if $p \geq 11$}.
  \end{cases}
\]
\end{corl}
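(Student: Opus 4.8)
The plan is to read the corollary off directly from the exact evaluation in Proposition~\ref{prop:sh-eval}. I would split the right-hand side of \eqref{eq:sh-eval} into three pieces: the terms I intend to keep modulo $p^r$ (those with even $\nu \leq r-1$), the tail of the sum (even $\nu$ with $r \leq \nu \leq p-3$), and the explicit remainder term. It then suffices to show that both the tail and the remainder vanish modulo $p^r$ under the hypothesis $p \geq \max(5, r+3-\delta)$. The whole argument is $p$-adic valuation bookkeeping.

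First I would bound the remainder in \eqref{eq:sh-eval}. In every case its valuation is at least $p-3+\delta$: for $d=1$ and for $d \not\equiv 1 \smod{p}$ one has $\delta = 1$ with remainder $O(p^{p-2})$, while for $d \geq 2$ with $d \equiv 1 \smod{p}$ one has $\delta = 0$ with remainder $O(p^{p-3})$. Since the hypothesis $p \geq r+3-\delta$ is equivalent to $p-3+\delta \geq r$, the remainder is $\equiv 0 \smod{p^r}$.

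Next I would discard the tail. For each even $\nu$ with $r \leq \nu \leq p-3$, the summand $\binom{n}{\nu+1}\BNPD_{n-\nu}\,p^\nu$ has $\ord_p \geq \nu \geq r$, because $\binom{n}{\nu+1}$ is an integer and $\BNPD_{n-\nu} \in \ZZ_p$ by Proposition~\ref{prop:bnp-int} (valid since $n-\nu \equiv -\nu \not\equiv 0 \smod{p-1}$ for $2 \leq \nu \leq p-3$). Hence the tail vanishes modulo $p^r$, and exactly the truncated sum of \eqref{eq:sh-mod} survives.

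Finally, for the displayed list of special cases I would observe that $d \leq p$ forces $\delta = 1$, as the only $d \in \{1,\ldots,p\}$ with $d \equiv 1 \smod{p}$ is $d=1$, which belongs to the case $\delta = 1$ by definition. The bound then reads $p \geq \max(5, r+2)$, rounded up to the next prime (so $r \in \{4,5\}$ needs $p \geq 7$ and $r=6$ needs $p \geq 11$); running $r = 1, \ldots, 6$ and keeping the even $\nu$ with $2 \leq \nu \leq r-1$ — none for $r \leq 2$, only $\nu = 2$ for $r \in \{3,4\}$, and $\nu \in \{2,4\}$ for $r \in \{5,6\}$ — reproduces the six stated congruences. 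There is no genuine obstacle here: the corollary is a formal specialization of Proposition~\ref{prop:sh-eval}, and the one point demanding care is aligning the remainder valuation $p-3+\delta$ with the hypothesis $p \geq r+3-\delta$, so that the remainder is killed modulo $p^r$.
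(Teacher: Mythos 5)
Your proposal is correct and follows essentially the same route as the paper: both read the congruence off from Proposition~\ref{prop:sh-eval} by checking that the explicit remainder has valuation at least $p-3+\delta \geq r$ (which is exactly the hypothesis $p \geq r+3-\delta$) and that the discarded even-indexed terms with $\nu \geq r$ contribute nothing modulo $p^r$ because their coefficients are $p$-integral. Your treatment of the special cases ($d \leq p$ forcing $\delta = 1$, then rounding $r+2$ up to the next prime) matches the intended reading as well.
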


\begin{proof}
By Proposition~\ref{prop:sh-eval}, we have the expansion
\[
  \SH_n(p) = \alpha_0 + \alpha_2 \, p^2 + \cdots + \alpha_{p-3} \, p^{p-3} + O(p^{p-3+\delta})
\] 
with some coefficients $\alpha_\nu \in \ZZ_p$ depending on $n$ and $p$.
Note that $\alpha_{p-3}$ cannot be determined if $\delta = 0$ and the remainder term is $O(p^{p-3})$.
This gives the bound $r \leq p-3+\delta$, which yields $p \geq r+3-\delta$ and $p \geq 5$ by assumption,
implying the congruence \eqref{eq:sh-mod}.
\end{proof}

\begin{lemma} \label{lem:bin-diff}
For $k, n \geq 1$,  and any prime $p > k$, we have 
\[
  \opdf{p-1}{n} \binom{\nu}{k} \valueat{\nu = 0}
  \equiv (-1)^k \binom{k-1}{n-1} \smod{p}.
\]
\end{lemma}

\begin{proof}
Note that for $n > k$ both sides of the above congruence vanish, 
so let $1 \leq n \leq k$. Since $p > k$, we infer for $\nu \geq 0$ that
\[
  \binom{\nu(p-1)}{k} \equiv \binom{-\nu}{k}
  \equiv (-1)^k \binom{k-1+\nu}{k} \smod{p}.
\]
By rules of the difference operator $\opdf{}{}$, we then obtain
\[
  \opdf{p-1}{n} \binom{\nu}{k} \valueat{\nu = 0}
  \equiv \opdf{}{n} \binom{\nu(p-1)}{k} \valueat{\nu = 0}
  \equiv (-1)^k \binom{k-1}{k-n}
  \equiv (-1)^k \binom{k-1}{n-1} \smod{p}. \qedhere
\]  
\end{proof}

\begin{prop} \label{prop:qp-bnp}
For $n \geq 1$ and any prime $p$, we have 
\[
  Q_p(n) = \opsh_p^{n-1} \, \opdf{p-1}{n} \, \SH_\nu(p) \valueat{\nu = 0}.
\]
In particular, we have for $n \in \set{1,2,3}$ and $p \geq 5$ that
\begin{align}
  p^{n-1} Q_p(n) &\equiv \opdf{p-1}{n} \, \BNP_\nu \valueat{\nu = 0}
  - \alpha_n \, p^2 \, \BNPD_{p-3} \smod{p^3} \label{eq:bnp-p3} \\
\shortintertext{and equivalently}
  \frac{1}{n} p^{n-1} Q_p(n) &\equiv (p-1) \opdf{p-1}{n-1} \, \BNPD_\nu \valueat{\nu = p-1}
  - \frac{\alpha_n}{n} \, p^2 \, \BNPD_{p-3} \smod{p^3}, \label{eq:bnpd-p3}
\end{align}
where $\alpha = (1,2,1)$.
\end{prop}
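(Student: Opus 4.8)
The plan is to separate the clean operator identity from the $p$-adic bookkeeping, and then deduce the two explicit congruences in turn.

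\emph{The general identity.} I would start from Lemma~\ref{lem:qp-diff}, which reads $Q_p(n) = \opsh_p^n\,\opdf{p-1}{n}\,S_\nu(p)\valueat{\nu=0}$, and substitute the defining relation \eqref{eq:sh-def} in the form $S_\nu(p) = p\,\SH_\nu(p) + S_0(p)$. Since $\opdf{p-1}{n}$ has order $n\geq 1$, it annihilates the constant summand $S_0(p)=p-1$ (which does not depend on $\nu$), so $\opdf{p-1}{n}\,S_\nu(p)\valueat{\nu=0} = p\,\opdf{p-1}{n}\,\SH_\nu(p)\valueat{\nu=0}$. As $\opsh_p$ is honest division by $p$ on $p\ZZ_p$, one factor of $p$ cancels one application of $\opsh_p$, i.e.\ $\opsh_p^n(p\,X) = \opsh_p^{n-1}(X)$ for $X\in\ZZ_p$, which gives the asserted identity. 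The only point to check is that the intermediate quantity lies in the correct power of $p\ZZ_p$, which is guaranteed by Lemma~\ref{lem:qp-diff} itself.

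\emph{The congruence \eqref{eq:bnp-p3}.} Multiplying the identity by $p^{n-1}$ gives $p^{n-1}Q_p(n) = \opdf{p-1}{n}\,\SH_\nu(p)\valueat{\nu=0} = \sum_{j=0}^{n}\binom{n}{j}(-1)^{n-j}\SH_{j(p-1)}(p)$. The $j=0$ term vanishes since $\SH_0(p)=0$, and for $1\leq j\leq n\leq 3\leq p$ the hypothesis $p\geq 5$ permits invoking the third case of Corollary~\ref{cor:sh-cases}, namely $\SH_{j(p-1)}(p)\equiv\BNP_{j(p-1)} + p^2\binom{j(p-1)}{3}\BNPD_{j(p-1)-2}\smod{p^3}$. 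The leading terms reassemble into $\opdf{p-1}{n}\,\BNP_\nu\valueat{\nu=0}$ (the $j=0$ term again dropping, since $\BNP_0=0$). For the $p^2$-contribution, which I only need modulo $p$, I would reduce each falling factor to obtain $\binom{j(p-1)}{3}\equiv -\binom{j+2}{3}\smod{p}$ and use the Kummer congruence $\BNPD_{j(p-1)-2}\equiv\BNPD_{p-3}\smod{p}$, valid because $j(p-1)-2\equiv p-3\not\equiv 0\smod{p-1}$. This factors out $\BNPD_{p-3}$ and leaves the alternating sum $\sum_{j}\binom{n}{j}(-1)^{n-j}\binom{j+2}{3}$, which is the $n$-th finite difference at $0$ of the cubic $\binom{x+2}{3}$; evaluating it for $n=1,2,3$ produces precisely $\alpha=(1,2,1)$, with the overall sign giving $-\alpha_n$.

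\emph{The equivalent form \eqref{eq:bnpd-p3}.} Here I would invoke the discrete Leibniz rule for $\opdf{p-1}{n}$ applied to the product $\BNP_\nu = \nu\,\BNPD_\nu$. Because the factor $\nu$ is linear, all its forward differences of order $\geq 2$ vanish and only two terms survive; evaluating at $\nu=0$ kills the term carrying the factor $\nu$, leaving $\tfrac{1}{n}\,\opdf{p-1}{n}\,\BNP_\nu\valueat{\nu=0} = (p-1)\,\opdf{p-1}{n-1}\,\BNPD_\nu\valueat{\nu=p-1}$. Dividing \eqref{eq:bnp-p3} by $n$, a unit modulo $p^3$ for $n\in\set{1,2,3}$ and $p\geq 5$, then yields the stated congruence. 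I expect the main obstacle to be the bookkeeping of the $p^2$-terms in \eqref{eq:bnp-p3}: establishing the two reductions cleanly and recognizing the surviving alternating sum as a finite difference of a cubic, so that the vector $\alpha=(1,2,1)$ drops out structurally rather than being verified case by case. The operator rearrangements in the other two parts are routine once the domain of $\opsh_p$ is respected and the discrete Leibniz rule is in hand.
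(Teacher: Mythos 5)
Your proposal is correct and follows essentially the same route as the paper: the operator identity via annihilation of the constant $S_0(p)$ and cancellation of one $p$ against one $\opsh_p$, then Corollary~\ref{cor:sh-cases} modulo $p^3$ plus the Kummer congruence $\BNPD_{j(p-1)-2}\equiv\BNPD_{p-3}\smod{p}$ to isolate $\alpha=(1,2,1)$, and finally the identity $\opdf{p-1}{n}\,\BNP_\nu\valueat{\nu=0}=n(p-1)\,\opdf{p-1}{n-1}\,\BNPD_\nu\valueat{\nu=p-1}$ for the equivalent form. The only cosmetic differences are that you recognize the coefficient vector as the $n$-th finite difference of the cubic $\binom{x+2}{3}$ and phrase the last step as a discrete Leibniz rule, where the paper evaluates $\opdf{p-1}{n}\binom{\nu}{3}\valueat{\nu=0}$ directly and uses the identity $\binom{n}{\nu}\nu=n\binom{n-1}{\nu-1}$.
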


\begin{proof}
Let $n \geq 1$ and $p$ be a prime. Since constant terms vanish in differences, 
it follows from Lemma~\ref{lem:qp-diff} and \eqref{eq:sh-def} that
\[
  Q_p(n) = \opsh_p^{n-1} \, \opdf{p-1}{n} \, \frac{S_\nu(p)}{p} \valueat{\nu = 0}
  = \opsh_p^{n-1} \, \opdf{p-1}{n} \, \SH_\nu(p) \valueat{\nu = 0}.
\] 

Now, let $n \in \set{1,2,3}$ and $p \geq 5$, so $n < p$. Remember that $\SH_0(p) = \BNP_0 = 0$
and $\BNPD_{\nu} = 0$ for ${\nu < 0}$, being compatible with $\SH_0(p) = 0$.
Using Corollary~\ref{cor:sh-cases} in the case$\smod{p^3}$, we infer that
\[
  p^{n-1} Q_p(n) \equiv \opdf{p-1}{n} \, \BNP_\nu \valueat{\nu = 0}
  + p^2 \, \opdf{p-1}{n} \binom{\nu}{3} \BNPD_{\nu-2} \valueat{\nu = 0} \smod{p^3}.
\]
Due to the Kummer congruences and $\binom{0}{3} = 0$, we obtain
\[
  \opdf{p-1}{n} \binom{\nu}{3} \BNPD_{\nu-2} \valueat{\nu = 0}
  \equiv \opdf{p-1}{n} \binom{\nu}{3} \BNPD_{p-3} \valueat{\nu = 0} \smod{p}
\]
and
\[
  \alpha'_n \equiv \opdf{p-1}{n} \binom{\nu}{3} \valueat{\nu = 0} \smod{p}
\]
with $\alpha' = -(1,2,1)$ by Lemma~\ref{lem:bin-diff}.
Putting all together and converting signs imply \eqref{eq:bnp-p3}.

For the step from \eqref{eq:bnp-p3} to \eqref{eq:bnpd-p3}, we need to show that
\[
  \opdf{p-1}{n} \, \BNP_\nu \valueat{\nu = 0}
  = n (p-1) \opdf{p-1}{n-1} \, \BNPD_\nu \valueat{\nu = p-1}.
\]
Since $\BNP_0 = 0$, this follows from
\[
  \sum_{\nu=1}^n \binom{n}{\nu} (-1)^{n-\nu} \, \BNP_{\nu(p-1)}
  = n(p-1) \sum_{\nu=1}^n \binom{n-1}{\nu-1} (-1)^{n-\nu} \, \BNPD_{\nu(p-1)},
\]
completing the proof.
\end{proof}

\begin{prop} \label{prop:qp-bnp-2}
Let $1 \leq n \leq 4$ and $p \geq 7$ be a prime. Then we have
\[
  \frac{1}{n} p^{n-1} Q_p(n) \equiv (p-1) \opdf{p-1}{n-1} \, \BNPD_\nu \valueat{\nu = p-1}
  + \frac{\alpha_n}{n} \, p^2 \, \BNPD_{p-3}
  + \frac{\beta_n}{n} \, p^2 \, \BNPD_{2p-4}
  + \frac{\gamma_n}{n} \, p^3 \, \BNPD_{p-3} \smod{p^4}, 
\]
where $\alpha = (-1,2,7,4)$, $\beta = -(0,4,8,4)$, and $\gamma = (\frac{11}{6},5,3,0)$.
\end{prop}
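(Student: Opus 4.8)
The plan is to follow the template of Proposition~\ref{prop:qp-bnp} but to carry every estimate one $p$-power further. The starting point is again the exact identity $p^{n-1} Q_p(n) = \opdf{p-1}{n} \SH_\nu(p) \valueat{\nu = 0}$ obtained from Proposition~\ref{prop:qp-bnp} and the definition of $\opsh_p$. Since the difference operator samples $\SH_\nu(p)$ only at $\nu = j(p-1)$ with $0 \leq j \leq n \leq 4$, and $j \leq 4 < p$ for $p \geq 7$, Corollary~\ref{cor:sh-cases} applies in its $\smod{p^4}$ form and gives $\SH_\nu(p) \equiv \BNP_\nu + p^2 \binom{\nu}{3} \BNPD_{\nu-2} \smod{p^4}$ at each sampled point (the $j=0$ term vanishing because $\BNP_0 = 0$ and $\binom{0}{3} = 0$). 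By linearity of the difference this yields
\[
  p^{n-1} Q_p(n) \equiv \opdf{p-1}{n} \BNP_\nu \valueat{\nu=0} + p^2 \, \opdf{p-1}{n} \binom{\nu}{3} \BNPD_{\nu-2} \valueat{\nu=0} \smod{p^4}.
\]
The first summand is handled exactly as before: the identity $\opdf{p-1}{n} \BNP_\nu \valueat{\nu=0} = n(p-1)\opdf{p-1}{n-1}\BNPD_\nu\valueat{\nu=p-1}$ from Proposition~\ref{prop:qp-bnp} produces, after division by $n$, the leading term $(p-1)\opdf{p-1}{n-1}\BNPD_\nu\valueat{\nu=p-1}$ of the claimed congruence, which is kept abstractly and requires no further expansion.

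Everything new sits in the second summand $p^2 D_n$, where $D_n = \sum_{j=1}^n \binom{n}{j}(-1)^{n-j}\binom{j(p-1)}{3}\BNPD_{j(p-1)-2}$; because of the prefactor $p^2$ I only need $D_n \smod{p^2}$. First I would expand the binomial coefficient $p$-adically: writing $m = j(p-1) = -j + jp$ and discarding $(jp)^2 \equiv 0$, one gets $\binom{m}{3} \equiv c_j + c_j' \, p \smod{p^2}$ with $c_j = -\binom{j+2}{3}$ and $c_j' = \tfrac16 j(3j^2+6j+2)$, the latter being the linear-in-$p$ part of $m(m-1)(m-2)$ expanded about $m \equiv -j$. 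Second, I would replace the Bernoulli numbers $\BNPD_{j(p-1)-2}$, whose index is $\equiv -2 \smod{p-1}$, by a linear interpolation in $j$: the generalized Kummer congruences of Proposition~\ref{prop:gen-congr}(1), applied to the second difference starting at $\nu = p-3$ and at $\nu = 2p-4$ (both indices exceeding $r=2$ precisely because $p \geq 7$), force all relevant second differences to vanish $\smod{p^2}$, whence $\BNPD_{j(p-1)-2} \equiv (2-j)\BNPD_{p-3} + (j-1)\BNPD_{2p-4} \smod{p^2}$ for $1 \leq j \leq 4$.

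It remains to assemble these two expansions while tracking precision: in the term already carrying the factor $c_j' \, p$ I may use the coarse congruence $\BNPD_{j(p-1)-2} \equiv \BNPD_{p-3} \smod{p}$, whereas in the $c_j$-term I must use the refined linear interpolation. Collecting the coefficient of $\BNPD_{p-3}$ at order $p^0$, of $\BNPD_{2p-4}$ at order $p^0$, and of $\BNPD_{p-3}$ at order $p^1$ gives, after multiplication by $p^2$ and division by $n$, the three quantities
\[
  \frac{1}{n}\sum_{j=1}^n \binom{n}{j}(-1)^{n-j}c_j(2-j), \quad
  \frac{1}{n}\sum_{j=1}^n \binom{n}{j}(-1)^{n-j}c_j(j-1), \quad
  \frac{1}{n}\sum_{j=1}^n \binom{n}{j}(-1)^{n-j}c_j',
\]
which are exactly $\alpha_n/n$, $\beta_n/n$, $\gamma_n/n$; evaluating these finite sums for $n=1,2,3,4$ (with $c_j = -1,-4,-10,-20$ and $c_j' = \tfrac{11}{6}, \tfrac{26}{3}, \tfrac{47}{2}, \tfrac{148}{3}$) reproduces the stated vectors $\alpha$, $\beta$, $\gamma$. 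The main obstacle is purely the bookkeeping of $p$-adic precision: one must verify that the linear (rather than merely constant) Kummer interpolation is both \emph{available} — which is where $p \geq 7$ enters, through the index condition $\nu > r$ in Proposition~\ref{prop:gen-congr}(1) — and \emph{needed only} in the $p^0$-term, so that the unavoidable $\smod{p^2}$ error in $\BNPD_{j(p-1)-2}$ dies against the outer factor $p^2$ and nothing of order $p^4$ is lost.
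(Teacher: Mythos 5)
Your proposal is correct and follows essentially the same route as the paper's proof: apply Corollary~\ref{cor:sh-cases} in its $\smod{p^4}$ form at the sampled indices, expand $\binom{j(p-1)}{3}$ to first order in $p$, and reduce the resulting coefficients of $\BNPD_{j(p-1)-2}$ via the ordinary and generalized Kummer congruences (your linear interpolation $\BNPD_{j(p-1)-2} \equiv (2-j)\BNPD_{p-3} + (j-1)\BNPD_{2p-4} \smod{p^2}$ is exactly the paper's relation $\bb_j - 2\bb_{j+1} + \bb_{j+2} \equiv 0 \smod{p^2}$ in closed form). Your explicit formulas for $c_j$ and $c_j'$ and the resulting values of $\alpha$, $\beta$, $\gamma$ all check out against the paper's tabulated reductions.
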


\begin{proof}
Note that $n < p$. We use Corollary~\ref{cor:sh-cases} in the case$\smod{p^4}$. 
Similar to the proof of Proposition~\ref{prop:qp-bnp}, we then obtain
\[
  \frac{1}{n} p^{n-1} Q_p(n) \equiv (p-1) \opdf{p-1}{n-1} \, \BNPD_\nu \valueat{\nu = p-1}
  + \frac{1}{n} p^2 \, \opdf{p-1}{n} \binom{\nu}{3} \BNPD_{\nu-2} \valueat{\nu = 0} \smod{p^4}.
\]
We further substitute $\bb_j = \BNPD_{j(p-1)-2}$ and evaluate that
\[
  \opdf{p-1}{n} \binom{\nu}{3} \BNPD_{\nu-2} \valueat{\nu = 0}
  \equiv \sum_{j=1}^{n} (s_{n,j} + t_{n,j} \, p) \, \bb_j \smod{p^2}
\]
with some integer coefficients $s_{n,j}$ and $t_{n,j}$. 
By the Kummer congruences, we determine that
\[
  \sum_{j=1}^{n} t_{n,j} \, \bb_j \equiv \gamma_n \, \bb_1 \smod{p}
\]
with $\gamma = (\frac{11}{6},5,3,0)$. 
By the generalized Kummer congruences, we have for all $j \geq 1$ that
\[
  \bb_j - 2 \bb_{j+1} + \bb_{j+2} \equiv 0 \smod{p^2}.
\]
For the coefficients $s_{n,j}$, we compute the following expressions and 
their reduction $\!\smod{p^2}$:
\begin{center} \small
\begin{tabular}{cl}
\toprule
  $n=1$ & $- \bb_1 \smod{p^2}$ \\
  $n=2$ & $2 \bb_1 - 4 \bb_2 \smod{p^2}$ \\
  $n=3$ & $-3 \bb_1 + 12 \bb_2 - 10 \bb_3 \equiv 7 \bb_1 - 8 \bb_2 \smod{p^2}$ \\
  $n=4$ & $4 \bb_1 - 24 \bb_2 + 40 \bb_3 - 20 \bb_4 \equiv 4 \bb_1 - 4 \bb_2 \smod{p^2}$. \\
\bottomrule
\end{tabular}
\end{center} 
This defines $\alpha = (-1,2,7,4)$ and $\beta = -(0,4,8,4)$, and completes the proof.
\end{proof}


\section{Proof of Theorem~\ref{thm:main}}
\label{sec:proof}

Remember the notation of \eqref{eq:bnn-1} and \eqref{eq:bnn-2}. 
From \eqref{eq:bnpd-def}, it follows for $n \geq 1$ and any prime $p \geq 5$ that
\[
  \BNN_n = \BNPD_{n(p-1)} \andq \BNN_{n,2} = \BNPD_{n(p-1)-2}.
\]
These numbers, lying in $\ZZ_p$, satisfy the generalized Kummer congruences of 
Proposition~\ref{prop:gen-congr}. The congruences~\eqref{eq:bnpd-p3} of 
Proposition~\ref{prop:qp-bnp} then turn easily into the following congruences.

\begin{lemma} \label{lem:qp-bnn}
For any prime $p \geq 5$, we have 
\begin{align*}
  Q_p(1) &\equiv (p-1) \, \BNN_1 - p^2 \, \BNN_{1,2} \smod{p^3} \\
  &\equiv (p-1) \, \BNN_1 \smod{p^2} \\
  &\equiv -\BNN_1 \smod{p}, \\
  \tfrac{1}{2} p \, Q_p(2) &\equiv (p-1) (\BNN_2 - \BNN_1) - p^2 \, \BNN_{1,2} \smod{p^3} \\
  &\equiv -(\BNN_2 - \BNN_1) \smod{p^2}, \\
  \tfrac{1}{3} p^2 \, Q_p(3) &\equiv -(\BNN_3 - 2 \BNN_2 + \BNN_1)
  - \tfrac{1}{3} p^2 \, \BNN_{1,2} \smod{p^3}.
\end{align*}
\end{lemma}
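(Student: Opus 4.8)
The plan is to specialize the congruence \eqref{eq:bnpd-p3} of Proposition~\ref{prop:qp-bnp} to each $n \in \set{1,2,3}$ and translate the outcome into the divided Bernoulli numbers via the substitution \eqref{eq:bnn-sub}. The starting point is
\[
  \frac{1}{n} p^{n-1} Q_p(n) \equiv (p-1) \opdf{p-1}{n-1} \, \BNPD_\nu \valueat{\nu = p-1}
  - \frac{\alpha_n}{n} \, p^2 \, \BNPD_{p-3} \smod{p^3}
\]
with $\alpha = (1,2,1)$. First I would expand the forward differences explicitly: for $n=1$ the operator $\opdf{p-1}{0}$ is the identity, giving $\BNPD_{p-1} = \BNN_1$; for $n=2$ the first difference gives $\BNPD_{2(p-1)} - \BNPD_{p-1} = \BNN_2 - \BNN_1$; and for $n=3$ the second difference gives $\BNPD_{3(p-1)} - 2\BNPD_{2(p-1)} + \BNPD_{p-1} = \BNN_3 - 2\BNN_2 + \BNN_1$. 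Since $\BNPD_{p-3} = \BNPD_{(p-1)-2} = \BNN_{1,2}$ by \eqref{eq:bnn-sub}, inserting these expansions along with the values of $\alpha_n$ yields the three displayed congruences modulo $p^3$ at once.

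The reductions to lower moduli rest on elementary observations. Because $\BNN_{1,2} \in \ZZ_p$, any term $p^2 \BNN_{1,2}$ vanishes modulo $p^2$; and since $p-1 \equiv -1 \smod{p}$, each prefactor $(p-1)$ collapses to $-1$ once the modulus is lowered to $p$. This already delivers the $\smod{p^2}$ and $\smod{p}$ lines for $Q_p(1)$. For the $\smod{p^2}$ reduction of $\tfrac{1}{2} p\, Q_p(2)$, I would split $(p-1)(\BNN_2 - \BNN_1) = -(\BNN_2 - \BNN_1) + p(\BNN_2 - \BNN_1)$ and invoke the ordinary Kummer congruences $\BNN_2 \equiv \BNN_1 \smod{p}$ from \eqref{eq:bnn-1}, so that the trailing term lies in $p^2 \ZZ_p$ and drops out.

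The one genuinely delicate point, and the step I expect to be the main obstacle, is the $n=3$ case: I must replace the prefactor $(p-1)$ multiplying $\BNN_3 - 2\BNN_2 + \BNN_1$ by $-1$ while still asserting a $\smod{p^3}$ identity. Writing $(p-1) = -1 + p$ leaves behind $p(\BNN_3 - 2\BNN_2 + \BNN_1)$, which is harmless only if the second difference satisfies $\BNN_3 - 2\BNN_2 + \BNN_1 \equiv 0 \smod{p^2}$. This is precisely the generalized Kummer congruence of Proposition~\ref{prop:gen-congr}: with initial value $p-1$ (which is even and divisible by $p-1$) and $r=2$, condition~(2) reads $p > 2 + (p-1)/(p-1) = 3$, valid for $p \geq 5$, whence $\opdf{p-1}{2} \BNPD_\nu \valueat{\nu = p-1} \equiv 0 \smod{p^2}$. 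Consequently $p(\BNN_3 - 2\BNN_2 + \BNN_1) \in p^3 \ZZ_p$, the prefactor may be swapped for $-1$, and the last congruence follows. The crux is thus recognizing that this final replacement is not a formal manipulation but genuinely requires the \emph{higher-order} Kummer congruences rather than the ordinary ones used in the $n=2$ case.
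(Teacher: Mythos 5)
Your proposal is correct and follows exactly the route the paper takes: the paper derives this lemma by specializing \eqref{eq:bnpd-p3} via the substitution \eqref{eq:bnn-sub} and invoking the (generalized) Kummer congruences, which is precisely your argument with the details written out. You correctly identify the only nontrivial point, namely that the $n=3$ replacement of $(p-1)$ by $-1$ modulo $p^3$ needs $\BNN_3 - 2\BNN_2 + \BNN_1 \equiv 0 \smod{p^2}$ from Proposition~\ref{prop:gen-congr}(2), which holds for $p \geq 5$.
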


We split the proof of Theorem~\ref{thm:main} into three parts, as follows.
\begin{prop} \label{prop-p1}
Let $p$ be a prime. Then we have 
\[
  \WQ_p \equiv -\BNN_1 \smod{p}. 
\]
\end{prop}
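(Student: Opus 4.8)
The plan is to reduce the statement to a mod-$p$ evaluation of the divided $p$-integral Bernoulli number that is already available in the preliminaries. By \eqref{eq:bnn-sub} together with \eqref{eq:bnpd-def} one has $\BNN_1 = \BNPD_{p-1} = \BNP_{p-1}/(p-1)$ for $p \geq 5$, so the asserted congruence $\WQ_p \equiv -\BNN_1 \smod{p}$ is literally the statement that $-\BNP_{p-1}/(p-1) \equiv \WQ_p \smod{p}$. For $p \geq 5$ I would therefore invoke Proposition~\ref{prop:bnp-int}(2) with $n = p-1$: since $n \equiv 0 \smod{p-1}$, the normalized exponent is $n' = 0$, and the first case of that proposition gives exactly $-\BNP_{p-1}/(p-1) \equiv \WQ_p \smod{p}$. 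This settles $p \geq 5$ in a single line.

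Equivalently, and more in keeping with the schema of Figure~\ref{fig:scheme}, one can route through the power sum: the $r = 1$ instance of Theorem~\ref{thm:kel}(2) reads $\WQ_p \equiv \psi_1(Q_p(1)) = Q_p(1) \smod{p}$ because $\psi_1 = x_1$, and chaining this with the mod-$p$ line of Lemma~\ref{lem:qp-bnn}, namely $Q_p(1) \equiv -\BNN_1 \smod{p}$, yields the same conclusion. Underlying either route is the elementary fact that $p-1 \equiv -1 \smod{p}$, so dividing $\BNP_{p-1} = \BN_{p-1} + \tfrac1p - 1$ by $p-1$ merely flips its sign modulo $p$; this is precisely what turns the classical congruence $\WQ_p \equiv \BN_{p-1} + \tfrac1p - 1 \smod{p}$ of Glaisher and Beeger into the asserted form.

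The only genuine obstacle is that the result is claimed for \emph{every} prime, whereas Proposition~\ref{prop:bnp-int} and the substitution \eqref{eq:bnn-sub} require $p \geq 5$; the primes $p = 2,3$ fall outside that machinery precisely because $\BNN_1$ was \emph{extended} by hand through \eqref{eq:bnn-1} for them. For these I would verify the congruence by direct computation from the definitions. Here $(2-1)! = 1$ and $(3-1)! = 2$ give $\WQ_2 = 1$ and $\WQ_3 = 1$, while evaluating the extended \eqref{eq:bnn-1} gives $\BNN_1 = -1$ for $p = 2$ and $\BNN_1 = -\tfrac14$ for $p = 3$. Then $-\BNN_1 = 1 \equiv \WQ_2 \smod{2}$ and $-\BNN_1 = \tfrac14 \equiv 1 \equiv \WQ_3 \smod{3}$ (using $4 \equiv 1 \smod{3}$), finishing the two small cases. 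Thus the proof amounts to a one-line citation of Proposition~\ref{prop:bnp-int}(2) for $p \geq 5$ plus a brief hand check for $p = 2,3$, with no substantial difficulty in either part.
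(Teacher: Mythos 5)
Your proof is correct, and your second route (the $r=1$ case of Theorem~\ref{thm:kel} with $\psi_1 = x_1$, chained with the mod-$p$ line of Lemma~\ref{lem:qp-bnn}) together with the hand check of $\WQ_2 = \WQ_3 = 1$ against $\BNN_1 = -1, -\tfrac14$ is exactly the proof given in the paper. Your primary route, however, is genuinely different: it bypasses the power-sum machinery by noting $\BNN_1 = \BNPD_{p-1}$ and invoking Proposition~\ref{prop:bnp-int}(2) with $n = p-1$, $n' = 0$. That is logically sound within the paper, but be aware of what it rests on: the $n'=0$ case of that proposition is proved from Carlitz's congruence \eqref{eq:wp-car}, which is quoted from the literature and at $r=1$, $k=0$ is already the Glaisher--Beeger form of the present statement. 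So your first route amounts to recognizing Proposition~\ref{prop-p1} as a restatement of a classical result recorded in the preliminaries, whereas the paper deliberately rederives it through the new framework (Theorem~\ref{thm:kel} plus the $\SH_n$/Bernoulli-polynomial analysis underlying Lemma~\ref{lem:qp-bnn}) as the base case of the chain leading to the $p^2$, $p^3$, $p^4$ congruences. Each route buys something: yours gives a one-line consistency check against known results; the paper's demonstrates that the method reproduces the classical congruence before extending it. The $p=2,3$ computations are identical in both.
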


\begin{proof}
Note that $\WQ_2 = \WQ_3 = 1$ and $\BNN_1 = -1, -\frac{1}{4}$ for $p=2,3$, 
respectively. For these cases, the above congruence holds. Now, let $p \geq 5$.
From Theorem~\ref{thm:kel}, Table~\ref{tab:psi}, and Lemma~\ref{lem:qp-bnn}, 
we infer that $\WQ_p \equiv Q_p(1) \equiv -\BNN_1 \smod{p}$.
\end{proof}

\begin{prop} \label{prop-p2}
Let $p \geq 5$ be a prime. Then we have 
\[
  \WQ_p \equiv -2 \BNN_1 + \BNN_2 - \tfrac{1}{2} p \, \BNN_1^2 \smod{p^2}.
\]
\end{prop}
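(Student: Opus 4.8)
The plan is to specialize Theorem~\ref{thm:kel} to the case $r=2$ and then feed in the values of $Q_p(1)$ and $Q_p(2)$ supplied by Lemma~\ref{lem:qp-bnn}. Since $p \geq 5 > 2$, part~(2) of Theorem~\ref{thm:kel} applies and yields
\[
  \WQ_p \equiv \psi_1(Q_p(1)) + \tfrac{p}{2}\, \psi_2(Q_p(1),Q_p(2)) \smod{p^2}.
\]
Reading $\psi_1$ and $\psi_2$ off Table~\ref{tbl:psi}, this becomes
\[
  \WQ_p \equiv Q_p(1) + \tfrac{p}{2}\bigl( 2 Q_p(1) - Q_p(1)^2 - Q_p(2) \bigr) \smod{p^2}.
\]

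The key point is that the resulting summands require different $p$-adic precision, and Lemma~\ref{lem:qp-bnn} provides exactly what is needed in each case. The standalone $Q_p(1)$ outside the bracket must be known $\!\smod{p^2}$, so I would substitute $Q_p(1) \equiv (p-1)\BNN_1 \smod{p^2}$. Every term inside the bracket carries a factor $p$ and hence only needs to be known $\!\smod{p}$: there I would use $Q_p(1) \equiv -\BNN_1 \smod{p}$ (whence $Q_p(1)^2 \equiv \BNN_1^2$), while for the $Q_p(2)$ term it is cleanest to absorb the prefactor and write $-\tfrac{p}{2}Q_p(2) = -\tfrac{1}{2}p\,Q_p(2) \equiv \BNN_2 - \BNN_1 \smod{p^2}$ directly from the second block of Lemma~\ref{lem:qp-bnn}.

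Carrying out these substitutions, the $p\,Q_p(1)$ term will contribute $-p\BNN_1 \smod{p^2}$, cancelling the $p\BNN_1$ hidden in the leading $(p-1)\BNN_1$; what then remains is $-\BNN_1 + (\BNN_2 - \BNN_1) - \tfrac{1}{2}p\,\BNN_1^2$, which is the claimed formula after collecting terms. I expect no serious obstacle beyond careful bookkeeping of $p$-adic precision: the one delicate point is to resolve the standalone $Q_p(1)$ to the second order while resolving the bracketed copies only to the first, since using the $\!\smod{p}$ value in the standalone position would silently drop the decisive $p\BNN_1$ contribution.
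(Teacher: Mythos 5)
Your proposal is correct and follows essentially the same route as the paper: specialize Theorem~\ref{thm:kel} to $r=2$, read $\psi_1,\psi_2$ from Table~\ref{tbl:psi}, and substitute from Lemma~\ref{lem:qp-bnn}, using the $\!\smod{p^2}$ value $(p-1)\BNN_1$ for the standalone $Q_p(1)$ and the $\!\smod{p}$ values inside the $p$-multiplied bracket. The bookkeeping you describe, including the cancellation of the $p\BNN_1$ terms, matches the paper's computation exactly.
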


\begin{proof}
From Theorem~\ref{thm:kel} and Table~\ref{tab:psi}, we deduce that
\[
  \WQ_p \equiv Q_p(1) + \tfrac{1}{2} p \left( 2 Q_p(1) - Q_p^2(1) - Q_p(2) \right) \smod{p^2}. 
\]
By Lemma~\ref{lem:qp-bnn}, this translates into
\begin{align*}
  \WQ_p &\equiv (p-1) \, \BNN_1
  - \tfrac{1}{2} p \, ( 2\BNN_1 + \BNN_1^2 )
  + (\BNN_2 - \BNN_1) \\
  &\equiv -2 \BNN_1 + \BNN_2 - \tfrac{1}{2} p \, \BNN_1^2 \smod{p^2}. \qedhere
\end{align*}
\end{proof}

\begin{prop} \label{prop-p3}
Let $p \geq 5$ be a prime. Then we have 
\begin{equation} \label{eq:wp-p3}
  \WQ_p \equiv -3 \BNN_1 + 3 \BNN_2 - \BNN_3
  - \tfrac{3}{2} p \, \BNN_1^2 + p \, \BNN_1 \BNN_2
  - \tfrac{1}{6} p^2 \, \BNN_1^3 - \tfrac{1}{3} p^2 \, \BNN_{1,2} \smod{p^3}. 
\end{equation}
\end{prop}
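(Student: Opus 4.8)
The plan is to mirror the proofs of Propositions~\ref{prop-p1} and \ref{prop-p2}, now carrying the expansion of Theorem~\ref{thm:kel} one step further. First I would set $r = 3$ in Theorem~\ref{thm:kel} and read off $\psi_1, \psi_2, \psi_3$ from Table~\ref{tbl:psi}, obtaining
\begin{align*}
  \WQ_p &\equiv Q_p(1) + \tfrac{1}{2} p \bigl( 2 Q_p(1) - Q_p(1)^2 - Q_p(2) \bigr) \\
  &\quad + \tfrac{1}{6} p^2 \bigl( 6 Q_p(1) - 6 Q_p(1)^2 + Q_p(1)^3 + 3 Q_p(1) Q_p(2) - 3 Q_p(2) + 2 Q_p(3) \bigr) \smod{p^3}.
\end{align*}
This reduces the whole task to substituting the expressions for the power sums $Q_p(n)$ supplied by Lemma~\ref{lem:qp-bnn} and collecting the result by powers of $p$.

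The second step is a careful bookkeeping of $p$-adic precision. Since $Q_p(1)$ occurs with coefficient $1$, I would use its value $\!\smod{p^3}$; since $\tfrac12 p\, Q_p(2)$ and $\tfrac13 p^2\, Q_p(3)$ effectively carry the extra factors $p$ and $p^2$, the congruences of Lemma~\ref{lem:qp-bnn} for exactly these quantities $\!\smod{p^3}$ are what is needed. For the products, where each factor carries an additional power of $p$, lower precision suffices: $Q_p(1)^2$ is needed only $\!\smod{p^2}$ in the $\psi_2$-term and $\!\smod{p}$ in the $\psi_3$-term, while $Q_p(1)^3$ is needed only $\!\smod{p}$; here $Q_p(1) \equiv (p-1)\BNN_1 \!\smod{p^2}$ and $Q_p(1) \equiv -\BNN_1 \!\smod{p}$ suffice.

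The main obstacle is the mixed contribution $3 Q_p(1) Q_p(2) - 3 Q_p(2) = 3 Q_p(2)\bigl(Q_p(1) - 1\bigr)$ inside the $\psi_3$-term, carried by the prefactor $\tfrac16 p^2$. Evaluating it $\!\smod{p^3}$ requires $Q_p(2)\!\smod{p}$, which is not directly listed; I would recover it by dividing the congruence $\tfrac12 p\, Q_p(2) \equiv -(\BNN_2 - \BNN_1)\!\smod{p^2}$ by $p$, giving $Q_p(2) \equiv -2(\BNN_2 - \BNN_1)/p \!\smod{p}$. This division is legitimate precisely because the Kummer congruences (Proposition~\ref{prop:gen-congr}, equivalently the defining property of \eqref{eq:bnn-1}) give $\BNN_2 \equiv \BNN_1 \!\smod{p}$, so that $(\BNN_2 - \BNN_1)/p \in \ZZ_p$ before it is multiplied back by $p^2$. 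I expect this to be the only genuinely delicate point, since it is where an ostensibly non-integral quantity must first be shown to be $p$-integral.

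Finally I would collect by powers of $p$. The constant term should assemble from $Q_p(1)$, the $Q_p(2)$-part of $\psi_2$, and the $Q_p(3)$-part of $\psi_3$ into $-3\BNN_1 + 3\BNN_2 - \BNN_3$. The order-$p$ coefficient should emerge only after the linear terms $\pm p\BNN_1$ cancel and the $(\BNN_2 - \BNN_1)$ pieces telescope, leaving $-\tfrac12\BNN_1^2$ from the $-\tfrac12 p\, Q_p(1)^2$ term together with the surviving $(\BNN_2-\BNN_1)\BNN_1$ from the mixed $Q_p(2)$ contribution, i.e. $-\tfrac32\BNN_1^2 + \BNN_1\BNN_2$. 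The order-$p^2$ coefficient should reduce, after the $\pm\BNN_1$ and $\pm\BNN_1^2$ pairs coming from $Q_p(1)$ cancel and the three $\BNN_{1,2}$-terms (from $Q_p(1)$, from $\tfrac12 p\, Q_p(2)$, and from $\tfrac13 p^2\, Q_p(3)$) combine to $-\tfrac13\BNN_{1,2}$, to $-\tfrac16\BNN_1^3 - \tfrac13\BNN_{1,2}$. Matching these three coefficients against \eqref{eq:wp-p3} completes the proof, the remaining computation being routine once the precision of each substituted quantity has been fixed as above.
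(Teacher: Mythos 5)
Your proposal is correct and takes essentially the same route as the paper: expand $\WQ_p$ via Theorem~\ref{thm:kel} with $\psi_1,\psi_2,\psi_3$ from Table~\ref{tbl:psi}, substitute the congruences of Lemma~\ref{lem:qp-bnn} at the stated $p$-adic precisions, collect by powers of $p$, and use the Kummer congruences to dispose of the residual $(\BNN_2-\BNN_1)$ terms. The only cosmetic difference is that you extract $Q_p(2)\bmod p$ by dividing the $\bmod\ p^2$ congruence by $p$ (justified, as you note, by $\BNN_2\equiv\BNN_1\pmod p$), whereas the paper keeps $\tfrac12 p\,Q_p(2)$ as an undivided block.
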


\begin{proof}
Theorem~\ref{thm:kel} and Table~\ref{tab:psi} provide that
\begin{align*}
  \WQ_p &\equiv Q_p(1) \\
  &\quad + \tfrac{1}{2} p \left( 2 Q_p(1) - Q_p^2(1) - Q_p(2) \right) \\
  &\quad + \tfrac{1}{6} p^2 \left( 6 Q_p(1) - 6 Q_p(1)^2 + Q_p(1)^3
  + 3 Q_p(1) Q_p(2) - 3 Q_p(2) + 2 Q_p(3) \right) \smod{p^3}. 
\end{align*}
Replacing terms by Lemma~\ref{lem:qp-bnn} yields
\begin{align*}
  \WQ_p &\equiv (p-1) \, \BNN_1 - p^2 \, \BNN_{1,2} \\
  &\quad + p(p-1) \, \BNN_1 - \tfrac{1}{2} p (p-1)^2 \, \BNN_1^2 - (p-1) (\BNN_2 - \BNN_1) + p^2 \, \BNN_{1,2} \\
  &\quad + \Bigl( p^2 ( -\BNN_1 - \BNN_1^2 - \tfrac{1}{6} \BNN_1^3 )
  + p (p-1)^2 (\BNN_2 - \BNN_1) \BNN_1 + p \, (\BNN_2 - \BNN_1) \\
  &\qquad \;\; - (\BNN_3 - 2 \BNN_2 + \BNN_1) - \tfrac{1}{3} p^2 \, \BNN_{1,2} \Bigr) \smod{p^3}.
\end{align*}
After expanding and rearranging of terms (e.g., using \textsl{Mathematica}), we obtain
\[
  \WQ_p \equiv \omega_1 + \omega_2 \smod{p^3},
\]
where $\omega_1$ equals the right-hand side of \eqref{eq:wp-p3}. 
The remaining terms are given by
\[
  \omega_2 \equiv 2 p^2 \, \BNN_1 (\BNN_1 - \BNN_2) \equiv 0 \smod{p^3},
\]
which vanish by the Kummer congruences. This completes the proof.
\end{proof}

\begin{proof}[Proof of Theorem~\ref{thm:main}]
This follows from Propositions \ref{prop-p1} -- \ref{prop-p3} along with \eqref{eq:wp-def}.
\end{proof}

\begin{remark} 
As noted by Remark~\ref{rem:wp-red}, we would only need Proposition~\ref{prop-p3} to derive
also the congruences $\WQ_p \smod{p^r}$ for $r \in \set{1,2}$ by reduction (we leave this
as an exercise for the reader). However, Propositions \ref{prop-p1} -- \ref{prop-p3} are 
given separatively to show the simple and short proof in each case. 
\end{remark}


\section{Proofs of Theorems~\ref{thm:main2} and \ref{thm:main3}}
\label{sec:proof2}

We extend Lemma~\ref{lem:qp-bnn} for the case $\!\smod{p^4}$, as follows.
\begin{lemma} \label{lem:qp-bnn-2}
Let $p \geq 7$ be a prime. Then we have
\begin{align*}
  Q_p(1) &\equiv (p-1) \BNN_1 - p^2 \, \BNN_{1,2} + \tfrac{11}{6} p^3 \, \BNN_{1,2} \smod{p^4}, \\
  \tfrac{1}{2} p \, Q_p(2) &\equiv (p-1) (\BNN_2 - \BNN_1)
  + p^2 \, \BNN_{1,2} - 2 p^2 \, \BNN_{2,2} + \tfrac{5}{2} p^3 \, \BNN_{1,2} \smod{p^4}, \\
  \tfrac{1}{3} p^2 \, Q_p(3) &\equiv (p-1) (\BNN_3 - 2 \BNN_2 + \BNN_1)
  + \tfrac{7}{3} p^2 \, \BNN_{1,2} - \tfrac{8}{3} p^2 \, \BNN_{2,2} + p^3 \, \BNN_{1,2} \smod{p^4}, \\
  \tfrac{1}{4} p^3 \, Q_p(4) &\equiv -(\BNN_4 - 3 \BNN_3 + 3 \BNN_2 - \BNN_1)
  + p^2 \, \BNN_{1,2} - p^2 \, \BNN_{2,2} \smod{p^4}.
\end{align*}
\end{lemma}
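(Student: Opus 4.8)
The plan is to read Lemma~\ref{lem:qp-bnn-2} off directly from Proposition~\ref{prop:qp-bnp-2}, which already expresses $\frac{1}{n}p^{n-1}Q_p(n)$ modulo $p^4$ as an $(n-1)$-st forward difference of $\BNPD_\nu$ together with the two correction quantities $\BNPD_{p-3}$ and $\BNPD_{2p-4}$. The only genuine work is to translate these $\BNPD$-values into the notation $\BNN_j$, $\BNN_{j,2}$ via \eqref{eq:bnn-sub} and to simplify a single prefactor in the case $n=4$.

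First I would rewrite the correction terms using \eqref{eq:bnn-sub}: since $p-3 = (p-1)-2$ and $2p-4 = 2(p-1)-2$, we have $\BNPD_{p-3} = \BNN_{1,2}$ and $\BNPD_{2p-4} = \BNN_{2,2}$. With $\alpha = (-1,2,7,4)$, $\beta = (0,-4,-8,-4)$, and $\gamma = (\tfrac{11}{6},5,3,0)$ as in Proposition~\ref{prop:qp-bnp-2}, substituting and dividing entrywise by $n$ immediately yields the three tail coefficients (the $p^2\BNN_{1,2}$, $p^2\BNN_{2,2}$, and $p^3\BNN_{1,2}$ terms) in each of the four asserted lines; this part is pure bookkeeping.

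Next I would expand the leading difference term. Because $\BNN_j = \BNPD_{j(p-1)}$, the $(n-1)$-st difference evaluates to
\[
  \opdf{p-1}{n-1} \BNPD_\nu \valueat{\nu=p-1}
  = \sum_{k=0}^{n-1} \binom{n-1}{k} (-1)^{n-1-k} \, \BNN_{k+1},
\]
which equals $\BNN_1$, $\BNN_2-\BNN_1$, $\BNN_3-2\BNN_2+\BNN_1$, and $\BNN_4-3\BNN_3+3\BNN_2-\BNN_1$ for $n=1,2,3,4$ respectively. Multiplying by the prefactor $(p-1)$ reproduces the leading terms exactly as printed for $n\in\set{1,2,3}$.

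The one step needing more than bookkeeping is $n=4$, where the statement carries $-(\BNN_4-3\BNN_3+3\BNN_2-\BNN_1)$ in place of $(p-1)(\cdots)$. Setting $C = \opdf{p-1}{3}\BNPD_\nu\valueat{\nu=p-1}$, I would invoke the generalized Kummer congruences of Proposition~\ref{prop:gen-congr}(2): with $n=p-1$ divisible by $p-1$ and $p > 3 + 1$ (valid since $p\geq 7$), one gets $C \equiv 0 \smod{p^3}$. Hence $pC \equiv 0 \smod{p^4}$ and $(p-1)C = pC - C \equiv -C \smod{p^4}$, which accounts for the sign in the $n=4$ line. For $n\in\set{1,2,3}$ the corresponding difference vanishes only modulo $p^{\,n-1}$, so this reduction is unavailable and the factor $(p-1)$ must remain, consistent with how the lemma is phrased. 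The main obstacle is thus isolating this $n=4$ third-order reduction; everything else is a mechanical match against Proposition~\ref{prop:qp-bnp-2}.
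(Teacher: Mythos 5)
Your proposal is correct and follows essentially the same route as the paper: read the four congruences off Proposition~\ref{prop:qp-bnp-2} via the substitutions \eqref{eq:bnn-sub}, and use the generalized Kummer congruences of Proposition~\ref{prop:gen-congr} to replace the factor $p-1$ by $-1$ in the $n=4$ case. Your explicit verification that $\opdf{p-1}{3}\BNPD_\nu\valueat{\nu=p-1}\equiv 0 \smod{p^3}$ under the condition $p>4$ correctly fills in the one step the paper leaves implicit.
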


\begin{proof}
This follows from Proposition~\ref{prop:qp-bnp-2}. The congruence for $Q_p(4)$ 
can be reduced (i.e., the term $p-1$ is replaced by $-1$)
by the generalized Kummer congruences of Proposition~\ref{prop:gen-congr}.
\end{proof}

We can apply Lemma~\ref{lem:qp-bnn-2} together with Lemma~\ref{lem:qp-bnn} to substitute 
values of $Q_p$ in different moduli. We use \textsl{Mathematica} to shorten lengthy calculations.
The remaining terms are then simplified and reduced by applying the Kummer congruences afterwards.
We use the substitution 
\[
  \QT_p(n) = \frac{1}{n} p^{n-1} Q_p(n).
\]

\begin{proof}[Proof of Theorem~\ref{thm:main2}]
By Theorem~\ref{thm:kel} and Table~\ref{tab:psi}, we obtain
\[
  \WQ_p \equiv \omega_1 + \omega_2 + \omega_3 + \omega_4 \smod{p^4},
\]
where
\begin{align*}
  \omega_1 &\equiv Q_p(1) \equiv \QT_p(1) \smod{p^4}, 
\end{align*}
\begin{align*}
  \omega_2 &\equiv \tfrac{1}{2} p \mleft( 2 Q_p(1) - Q_p^2(1) - Q_p(2) \mright) \\
  &\equiv p \mleft( \QT_p(1) - \tfrac{1}{2} \QT_p^2(1) \mright) - \QT_p(2) \smod{p^4}, 
\end{align*}
\begin{align*}
  \omega_3 &\equiv \tfrac{1}{6} p^2 \mleft(
  6 Q_p(1) - 6 Q_p(1)^2 + Q_p(1)^3 + 3 Q_p(1) Q_p(2) - 3 Q_p(2) + 2 Q_p(3) \mright) \\
  &\equiv p^2 \mleft( \QT_p(1) - \QT_p(1)^2 + \tfrac{1}{6} \QT_p(1)^3 \mright)
  + p \mleft( \QT_p(2) (\QT_p(1)-1) \mright) + \QT_p(3) \smod{p^4}, 
\end{align*}
and
\begin{align*}
  \omega_4 &\equiv \tfrac{1}{24} p^3 \bigl(
  24 Q_p(1) - 36 Q_p(1)^2 + 12 Q_p(1)^3 - Q_p(1)^4 - 6 Q_p(1)^2 Q_p(2) \\
  &\quad \, + 24 Q_p(1) Q_p(2) - 8 Q_p(1) Q_p(3) - 12 Q_p(2) - 3 Q_p(2)^2 + 8 Q_p(3) - 6 Q_p(4) \bigr) \\
  &\equiv p^3 \mleft( \QT_p(1) - \tfrac{3}{2} \QT_p(1)^2 + \tfrac{1}{2} \QT_p(1)^3 - \tfrac{1}{24} \QT_p(1)^4 \mright) \\
  &\quad \, + p^2 \mleft( 2 \QT_p(1) \QT_p(2) - \tfrac{1}{2} \QT_p(1)^2 \, \QT_p(2) - \QT_p(2) \mright) \\
  &\quad \, + p \mleft( -\tfrac{1}{2} \QT_p(2)^2 - \QT_p(1) \QT_p(3) + \QT_p(3) \mright) - \QT_p(4) \smod{p^4}.
\end{align*}

The computation leads to
\[
  \WQ_p \equiv \omega'_1 + p \, \omega'_2 + p^2 \omega'_3 + p^3 \omega'_4 \smod{p^4}
\]
with the following terms and reduced ones by the Kummer congruences, respectively.
Namely,
\begin{align*}
  \omega'_1 &\equiv -4 \BNN_1 + 6 \BNN_2 - 4 \BNN_3 + \BNN_4 \smod{p^4}, 
\end{align*}
\begin{align*}
  \omega'_2 &\equiv -3 \BNN_1^2 + 4 \BNN_1 \BNN_2 - \BNN_1 \BNN_3 - \tfrac{1}{2} \BNN_2^2 \smod{p^3}, 
\end{align*}
\begin{align*}
  \omega'_3 &\equiv 2 \BNN_1^2 - \tfrac{2}{3} \BNN_1^3 + \tfrac{1}{2} \BNN_1^2 \BNN_2
  - 4 \BNN_1 \BNN_2 + \BNN_1 \BNN_3 + \BNN_2^2
  - \tfrac{2}{3} \BNN_{1,2} + \tfrac{1}{3} \BNN_{2,2} \\
  &\equiv (\BNN_1 - \BNN_2)^2 - \tfrac{2}{3} \BNN_1^3 + \tfrac{1}{2} \BNN_1^2 \BNN_2
  - \tfrac{2}{3} \BNN_{1,2} + \tfrac{1}{3} \BNN_{2,2} \\
  &\equiv -\tfrac{2}{3} \BNN_1^3 + \tfrac{1}{2} \BNN_1^2 \BNN_2
  - \tfrac{2}{3} \BNN_{1,2} + \tfrac{1}{3} \BNN_{2,2} \smod{p^2}, 
\end{align*}
and
\begin{align*}
  \omega'_4 &\equiv \tfrac{1}{2} \BNN_1^2 + \BNN_1^3 - \tfrac{1}{24} \BNN_1^4
  - \BNN_1^2 \BNN_2 - \tfrac{1}{2} \BNN_2^2
  - \tfrac{4}{3} \BNN_1 \BNN_{1,2} + 2 \BNN_2 \BNN_{1,2} - \BNN_3 \BNN_{1,2} \\
  &\equiv -\tfrac{1}{24} \BNN_1^4 - \tfrac{1}{3} \BNN_1 \BNN_{1,2} \smod{p}.
\end{align*}
This shows the result.
\end{proof}

\begin{proof}[Proof of Theorem~\ref{thm:main3}]
This follows from Lemmas~\ref{lem:qp-bnn} and \ref{lem:qp-bnn-2}.
The formulas were checked whether they also hold for smaller primes $p=3, 5$. 
\end{proof}


\section{Discussion}
\label{sec:dis}

We compare the results of Theorems~\ref{thm:main} and \ref{thm:main2} with 
former approaches to evaluate ${(p-1)!} \smod{p^r}$ for ${r \geq 2}$, 
which use completely different methods. For example, this leads to congruences 
of convolution sums of the Bernoulli numbers. 

In 1900, Glaisher~\cite[p.\,325]{Glaisher:1900} obtained the congruence
\[
  (p-1)! \equiv -1 + p \mleft( -1 + \BN_{p-1} + \tfrac{1}{p} \mright) \smod{p^2},
\]
which is equivalent to
\[
  \WQ_p \equiv -\BNN_1 \smod{p}.
\]

100 years later, Z.\,H.\,Sun~\cite[p.\,195]{Sun:2000} derived the next result 
\[
  (p-1)! \equiv p \frac{\BN_{2p-2}}{2p-2} - p \frac{\BN_{p-1}}{p-1}
  - \frac{1}{2} \mleft( p \frac{\BN_{p-1}}{p-1} \mright)^{\! 2} \! \smod{p^3},
\]
which is equivalent to
\[
  \WQ_p \equiv -2 \BNN_1 + \BNN_2 - \tfrac{1}{2} p \, \BNN_1^2 \smod{p^2}. 
\]

Recently, Levaillant~\cite[pp.\,83,\,110]{Levaillant:2020} gave a result for 
$(p-1)! \smod{p^4}$ by extending work of Z.\,H.\,Sun~\cite{Sun:2000}. 
The formula is impractical to use due to numerous $p$-adic nested terms 
and requires about one page to be written out in full. The proofs are extremely 
complicated and very lengthy. Here we present the 7-term relation 
\[
  \WQ_p \equiv -3 \BNN_1 + 3 \BNN_2 - \BNN_3
  + p \, \mleft( -\tfrac{3}{2} \BNN_1^2 + \BNN_1 \BNN_2 \mright)
  - p^2 \mleft( \tfrac{1}{6} \BNN_1^3 + \tfrac{1}{3} \BNN_{1,2} \mright) \smod{p^3}. 
\]

The presented theory in this paper allows us to compute the 
Wilson quotient $\WQ_p$ and $(p-1)!$ modulo any higher prime power of $p$,
which is achieved by Theorem~\ref{thm:kel} in terms of the power sums $Q_p$
of the Fermat quotients $q_p$. The values of $Q_p$ can then be translated into 
congruences of the Bernoulli numbers as provided by Theorem~\ref{thm:main3}.

Even the next case is computable with little effort as
\begin{align*}
  \WQ_p &\equiv -4 \BNN_1 + 6 \BNN_2 - 4 \BNN_3 + \BNN_4
  + p \, \mleft( -3 \BNN_1^2 + 4 \BNN_1 \BNN_2 - \BNN_1 \BNN_3 - \tfrac{1}{2} \BNN_2^2 \mright) \\
  &\quad + p^2 \mleft( -\tfrac{2}{3} \BNN_1^3 + \tfrac{1}{2} \BNN_1^2 \BNN_2
  - \tfrac{2}{3} \BNN_{1,2} + \tfrac{1}{3} \BNN_{2,2} \mright)
  - p^3 \mleft( \tfrac{1}{24} \BNN_1^4 + \tfrac{1}{3} \BNN_1 \BNN_{1,2} \mright) \smod{p^4}. 
\end{align*}

The divided Bernoulli numbers $\BNN_\nu$ and $\BNN_{\nu,2}$ are embedded in the 
theory of the Kummer congruences. By this means, the results for $\WQ_p$ in 
different moduli can be easily reduced in a chain by
\[
  \WQ_p \smod{p^4} \;\longmapsto\; \WQ_p \smod{p^3} \;\longmapsto\;
  \WQ_p \smod{p^2} \;\longmapsto\; \WQ_p \smod{p}.
\]


\bibliographystyle{amsplain}

\end{document}